\newcolumntype{P}[1]{>{\centering\arraybackslash}m{#1}}
\def\wrtext#1{\relax\ifmmode{\leavevmode\hbox{#1}}\else{#1}\fi}
\def\abs#1{\left|#1\right|}
\def\begeq{\begin{equation}}
\def\endeq{\end{equation}}
\def\?[#1]{\textbf{[#1]}\marginpar{\Large{\textbf{??}}}}
\let\epsilon=\varepsilon 
\def\norm#1{||\,#1\,||}
\newtheorem{dref}{Definition}[section]
\newtheorem{theo}[dref]{Theorem}
\newtheorem{prop}[dref]{Proposition}
\numberwithin{equation}{section}
\newcommand\reallywidehat[1]{\arraycolsep=0pt\relax%
\begin{array}{c}
\stretchto{
  \scaleto{
    \scalerel*[\widthof{\ensuremath{#1}}]{\kern-.5pt\bigwedge\kern-.5pt}
    {\rule[-\textheight/2]{1ex}{\textheight}} 
  }{\textheight} %
}{0.5ex}\\           
#1\\                 
\rule{-1ex}{0ex}
\end{array}
}
\begin{document}

\title[Magnetic translations and pseudodifferential operators]{Weighted symbol spaces, magnetic translations, and pseudodifferential operators}

\author{Michael Hitrik}
\address{Department of Mathematics, University of California, Los Angeles, CA 90095, USA.}
\email{hitrik@math.ucla.edu}

\author{Reid Johnson}
\address{Department of Mathematics, University of California, Los Angeles, CA 90095, USA.}
\email{reid@math.ucla.edu}

\begin{abstract}
We study pseudodifferential operators associated to microlocally defined normed symbol spaces of limited regularity, introduced by J. Sj\"ostrand~\cite{Sj08}. Boun\-ded\-ness of such operators on modulation spaces is obtained under suitable conditions, and continuity properties of the Weyl composition product are established.
\end{abstract}

\maketitle

\setcounter{tocdepth}{1}
\tableofcontents

\section{Introduction and statement of results}
\label{sec_intro}
\medskip
\noindent
In 1994-95, in a series of two remarkable, now classic, papers~\cite{Sj94},~\cite{Sj95}, J. Sj\"ostrand introduced a Banach space of symbols of limited regularity on the phase space $E = T^*\mathbb R^n$, and established that the corresponding space of pseudodifferential operators is stable under composition, is contained in the space of $L^2$--bounded operators, and enjoys the Wiener property. The space of symbols in question, nowadays often called the Sj\"ostrand class~\cite{Gr06}, is defined as follows: let $\Gamma$ be a lattice in $E$ and let $\chi \in \mathscr S(E)$ be such that
\[
\sum_{\gamma \in \Gamma} \tau_{\gamma} \chi = 1, \quad \tau_{\gamma} \chi(x) = \chi(x - \gamma).
\]
We say that $a\in \mathscr S'(E)$ is in the Sj\"ostrand class if
\[
\sup_{\gamma \in \Gamma} \abs{\mathcal F(\chi_{\gamma} a)(x^*)} \in L^1(E^*). 
\]
Here $\mathcal F$ stands for the Fourier transformation on $E$. See also~\cite{Bou97}. It subsequently turned out that the Sj\"ostrand class  can be viewed as a special case of modulation spaces~\cite{Fe83}, \cite{BeOk_book}, and as such, it plays a significant role in time frequency ana\-ly\-sis~\cite{Gr_book}. The paper~\cite{GrHe99} established that pseudodifferential operators with symbols in the Sj\"ostrand class are bounded on the full scale of modulation spaces, and there is by now a rich and substantial body of work studying pseudodifferential and Fourier integral operators with symbols of limited regularity and their relations to broad classes of modulation spaces, see \cite{GrHe99}, \cite{To01}, \cite{La01}, \cite{To04}, \cite{To07}, \cite{HoToWa07}, \cite{GrRz08}, \cite{CoNi}, \cite{GrTo11}, \cite{Co13}. As remarked in~\cite{Sj95}, some of the original motivation for the algebras of pseudodifferential operators introduced in~\cite{Sj94} came from their potential applications to non-linear, including inverse, problems, as well as to problems in high dimensions. More recently, the Wiener algebras of~\cite{Sj94},~\cite{Sj95} were applied to the study of decay rates for the damped wave equation with low re\-gu\-larity damping, see~\cite{AnLe},~\cite{Kl}.

\medskip
\noindent
In 2008, in the work~\cite{Sj08}, Sj\"ostrand returned to the study of pseudodifferential operators with symbols of limited regularity, generalizing substantially the original definition of the symbol classes in~\cite{Sj94} and establishing results on the $L^2$ continuity and the composition for the associated pseudodifferential classes. The purpose of this note is to pursue the study of the classes of operators introduced in~\cite{Sj08}, obtaining continuity results on modulation spaces and making the results of~\cite{Sj08} on the composition slightly more precise. We shall now proceed to introduce the assumptions and state the main results. When doing so, we shall always consider the Weyl quantization
\[
a^w: \mathscr S(\mathbb R^n) \rightarrow \mathscr S'(\mathbb R^n),
\]
of a symbol $a\in \mathscr S'(T^*\mathbb R^n)$, given by
\[
a^w u(x) = \frac{1}{(2\pi)^n} \int\!\!\!\int e^{i(x-y)\cdot \theta} a\left(\frac{x+y}{2},\theta\right) u(y)\, dy\, d\theta.
\]

\bigskip
\noindent
Let $E = \mathbb R^d$, and let $0 < m$ be an order function on $E \times E^*$, where $E^*$ is the dual space, in the sense that $m$ is Lebesgue measurable and we have for some $C_0>0$, $N_0 > 0$,
\begeq
\label{eq1.0}
m(X) \leq C_0 \langle{X-Y\rangle}^{N_0} m(Y),\quad X,Y \in E \times E^*.
\endeq
Here we write $\langle{X -Y \rangle} = (1 + \abs{X -Y}^2)^{1/2}$. Let $\Gamma$ be a lattice in $E \times E^*$, so that
\[
\Gamma = \bigoplus_{j=1}^{2d} \mathbb Z e_j,
\]
where $e_1, \ldots ,e_{2d}$ is a basis for $E \times E^*$, and let $0\leq \chi \in \mathscr S(E \times E^*)$ be such that
\begeq
\label{eq1.1}
\sum_{\gamma \in \Gamma} \tau_{\gamma}\chi = 1, \quad \tau_{\gamma}\chi (X) = \chi(X - \gamma).
\endeq
When $a\in \mathscr S'(E)$, following~\cite[Definition 2.1]{Sj08} we say that $a\in \widetilde{S}(m)$ if the function
\begeq
\label{eq1.2}
\Gamma \ni \gamma \mapsto \frac{1}{m(\gamma)} \norm{\chi_{\gamma}^w a}_{L^2(E)}
\endeq
belongs to $\ell^{\infty}(\Gamma)$. Here $\chi_{\gamma} = \tau_{\gamma} \chi$ and $\chi_\gamma^w$ stands for the Weyl quantization of $\chi_{\gamma}$. It is shown in~\cite[Proposition 2.2]{Sj08} that when equipped with the norm given by the $\ell^{\infty}(\Gamma)$--norm of the function in (\ref{eq1.2}), the space $\widetilde{S}(m)$ becomes a Banach space, and changing $\Gamma$, $\chi$ gives rise to the same space with an equivalent norm.

\medskip
\noindent
{\it Remark}. It follows from~\cite[Proposition 5.1]{Sj08},~\cite[Proposition 6.1]{Horm91} that each $a\in \mathscr S'(E)$ satisfies $a\in \widetilde{S}(m)$, for some order function $m$ on $E \times E^*$. The space $\widetilde{S}(1)$ agrees with the modulation space $M^{\infty}(E)$, see (\ref{app30}) below, and in particular, $L^p(E) \subseteq \widetilde{S}(1)$, $\mathcal F(L^p(E)) \subseteq \widetilde{S}(1)$, for $1\leq p \leq \infty$. Here $\mathcal F u(\xi) = \widehat{u}(\xi) = \int e^{-ix\cdot \xi} u(x)\, dx$ is the standard Fourier transformation on $E$.

\medskip
\noindent
In what follows we shall take $E = T^*\mathbb R^n$, equipped with its standard symplectic structure, and let
\begeq
\label{eq1.3}
J=\begin{pmatrix}0 &1\\ -1 &0\end{pmatrix},\quad J^t = -J,\quad J^2 = -1.
\endeq
It will sometimes be convenient to view $J$ as a linear map: $E^* \ni \ell \mapsto J \ell = H_{\ell}\in E$, where $H_{\ell}$ is the Hamilton vector field of the linear form $\ell$. Let us introduce the linear bijection
\begeq
\label{eq1.3.1}
q: E \times E \ni (x,y) \mapsto \left(\frac{x+y}{2}, J^{-1}(y-x)\right) \in E \times E^*.
\endeq
The following is the first main result of this note, where $M^p(\mathbb R^n)$ stands for the standard (unweighted) modulation space, see~\cite[Chapter 11]{Gr_book} and Appendix \ref{modFBI}.

\begin{theo}
\label{theo_bounded}
Let $E = T^*\mathbb R^n$ and let $a\in \widetilde{S}(m)$, where $m$ is an order function on $E \times E^*$ such that the integral operator
\begeq
\label{eq1.4}
\mathcal Mf(x) = \int_E m(q(x,y)) f(y)\, dy
\endeq
is bounded on $L^p(E)$, for some $1\leq p \leq \infty$. Then the operator
\begeq
\label{eq1.5}
a^w: M^p(\mathbb R^n) \rightarrow M^p(\mathbb R^n)
\endeq
is bounded, and we have for some $C>0$ independent of $p$,
\begeq
\label{eq1.5.1}
\norm{a^w}_{\mathcal L(M^p(\mathbb R^n), M^p(\mathbb R^n))} \leq C \norm{a}_{\widetilde{S}(m)}\, \norm{\mathcal M}_{\mathcal L(L^p(E), L^p(E))}.
\endeq
In particular, if $\mathcal M$ is a Schur class operator,
\[
\sup_x\, \int_E m(q(x,y))\, dy < \infty, \quad \sup_y\, \int_E m(q(x,y))\, dx < \infty,
\]
then the operator $a^w$ is bounded,
\[
a^w: M^p(\mathbb R^n) \rightarrow M^p(\mathbb R^n), \quad \forall\, p\in [1,\infty],
\]
with
\[
\norm{a}_{\mathcal L(M^p(\mathbb R^n), M^p(\mathbb R^m))} \leq C \norm{a}_{\widetilde{S}(m)}\, {\rm max}\left(\sup_x \int_E m\left(q(x,y)\right)\, dy, \sup_y \int_E m\left(q(x,y)\right)\, dx\right).
\]
\end{theo}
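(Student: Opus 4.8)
The plan is to transfer everything to the time--frequency (short-time Fourier transform) side, where $a^w$ becomes an integral operator whose kernel is controlled pointwise by $m\circ q$, and then to read off the $M^p$ bound from the $L^p$ bound for $\mathcal M$. Fix a Gaussian $\phi\in\mathscr S(\mathbb R^n)$ with $\|\phi\|_{L^2}=1$, and recall (Appendix \ref{modFBI}, \cite[Chapter 11]{Gr_book}) that we may take $\|u\|_{M^p}=\|V_\phi u\|_{L^p(E)}$, where $V_\phi u(X)=\langle u,\pi(X)\phi\rangle$ and $\pi(X)$, $X\in E$, is the phase-space (magnetic) translation. It suffices to establish \eqref{eq1.5.1} for $u$ in a dense subspace, say $\mathscr S(\mathbb R^n)$, and to extend by density; the case $p=\infty$ may alternatively be recovered from $p=1$ by the duality $M^\infty=(M^1)'$, using $(a^w)^*=\bar a^{\,w}$ and $\|\bar a\|_{\widetilde S(m)}=\|a\|_{\widetilde S(m)}$. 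Inserting the (weak) reproducing formula $u=\int_E V_\phi u(Y)\,\pi(Y)\phi\,dY$ and using the linearity of the Weyl quantization gives
\[
V_\phi(a^w u)(X)=\int_E K_a(X,Y)\,V_\phi u(Y)\,dY,\qquad K_a(X,Y)=\langle a^w\pi(Y)\phi,\pi(X)\phi\rangle .
\]

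The crux is to identify this Gabor matrix of $a^w$ with an STFT of the symbol itself. Writing $\langle a^w f,g\rangle=\langle a,W(g,f)\rangle$ in terms of the cross-Wigner distribution $W$ and exploiting its covariance under phase-space translations, I expect to obtain
\[
|K_a(X,Y)|=\big|V_\Phi a\big(q(X,Y)\big)\big|,\qquad \Phi=W(\phi,\phi)\in\mathscr S(E\times E^*),
\]
where $q$ is the linear bijection \eqref{eq1.3.1}: the base point of the STFT is the average $\tfrac{X+Y}{2}$ and the frequency variable is $J^{-1}(Y-X)$, and $V_\Phi a$ denotes the STFT of $a$ (viewed as a function on $E$) with window $\Phi$ on $T^*E=E\times E^*$. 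Keeping track of this linear change of variables, and in particular of the symplectic matrix $J$ of \eqref{eq1.3}, is the one genuinely delicate point; it is precisely here that the map $q$ of the statement makes its appearance.

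Next I would convert the symbol norm into a pointwise bound on $V_\Phi a$. By the modulation-space description of $\widetilde S(m)$ --- the weight-$m$ analogue of the identity $\widetilde S(1)=M^\infty(E)$ noted in the Remark, together with the fact that two Schwartz windows define equivalent norms, where the order-function property \eqref{eq1.0} keeps the comparison constants under control --- one has
\[
|V_\Phi a(Z)|\le C\,m(Z)\,\|a\|_{\widetilde S(m)},\qquad Z\in E\times E^* .
\]
Combining this with the previous step yields $|K_a(X,Y)|\le C\|a\|_{\widetilde S(m)}\,m(q(X,Y))$ and hence the pointwise majorization
\[
|V_\phi(a^w u)(X)|\le C\|a\|_{\widetilde S(m)}\int_E m(q(X,Y))\,|V_\phi u(Y)|\,dY=C\|a\|_{\widetilde S(m)}\,\mathcal M(|V_\phi u|)(X),
\]
with a constant $C$ that does not depend on $p$, since this inequality holds pointwise before any $L^p$ norm is taken.

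Taking $L^p(E)$ norms and using the assumed boundedness of $\mathcal M$ on $L^p(E)$ then gives
\[
\|a^w u\|_{M^p}=\|V_\phi(a^w u)\|_{L^p(E)}\le C\|a\|_{\widetilde S(m)}\,\|\mathcal M\|_{\mathcal L(L^p,L^p)}\,\|V_\phi u\|_{L^p(E)}=C\|a\|_{\widetilde S(m)}\,\|\mathcal M\|_{\mathcal L(L^p,L^p)}\,\|u\|_{M^p},
\]
which is exactly \eqref{eq1.5.1}. Finally, the two displayed conditions in the last assertion are precisely the hypotheses of Schur's test, which guarantees that $\mathcal M$ is bounded on $L^p(E)$ for every $p\in[1,\infty]$ with $\|\mathcal M\|_{\mathcal L(L^p,L^p)}\le\max\big(\sup_x\int_E m(q(x,y))\,dy,\ \sup_y\int_E m(q(x,y))\,dx\big)$; inserting this into the bound above yields the stated conclusion.
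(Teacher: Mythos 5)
Your proposal is correct, but it follows a genuinely different route from the paper: you work entirely in the real domain with the short-time Fourier transform, the Gabor matrix $K_a(X,Y)=\langle a^w\pi(Y)\phi,\pi(X)\phi\rangle$, and the almost-diagonalization identity $|K_a(X,Y)|=|V_\Phi a(q(X,Y))|$ coming from the covariance of the cross-Wigner distribution --- essentially the Gr\"ochenig--Heil/Gr\"ochenig mechanism adapted to the weighted class $\widetilde S(m)$. The paper instead passes to the FBI--Bargmann side: it uses the rank-one decomposition \eqref{eq2.15} of $a^w$ in terms of magnetic translates of the coherent state $V_0$, converts the $\widetilde S(m)$ condition into the bound \eqref{eq3.14} on the symplectic short-time Fourier transform of $b=a\circ\kappa^{-1}$, and then runs the same Schur/$L^p$ argument on $H^p_\Phi(\mathbb C^n)$, with an extra step (the projector $\Pi_\Phi$ and duality against $H^q_\Phi$) to conclude that $\mathcal T(a^wu)\in H^p_\Phi$. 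The two are morally parallel --- the paper itself remarks that \eqref{eq2.15} is the Bargmann-side analogue of the real-domain decomposition of Gr\"ochenig--Heil --- and your key pointwise bound $|V_\Phi a(Z)|\le Cm(Z)\norm{a}_{\widetilde S(m)}$ is exactly the characterization \eqref{eq3.8} the paper extracts from \cite[Proposition 5.1]{Sj08}. What your route buys is a shorter, more self-contained argument for readers versed in time-frequency analysis, with the $p$-independence of the constant transparent from the pointwise majorization; what the paper's route buys is uniformity with the rest of the article (the effective-kernel formalism and the composition theorem reuse the same Bargmann-side objects). One point deserves explicit verification in a written-up version: the orientation of $q$ in the bound $|K_a(X,Y)|\le C\norm{a}_{\widetilde S(m)}\,m(q(X,Y))$, with $X$ the output variable, is not cosmetic for $p\ne 2$, since the integral operators with kernels $m(q(x,y))$ and $m(q(y,x))$ have, in general, different $L^p$ operator norms (they are transposes of one another); you flag this as the delicate point, and the sign conventions in \eqref{eq1.3.1} do work out consistently with the paper's \eqref{eq3.14}.
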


\medskip
\noindent
{\it Remark}. In the case $p=2$, when $M^2(\mathbb R^n) = L^2(\mathbb R^n)$, Theorem \ref{theo_bounded} is due to Sj\"ostrand \cite[Theorem 3.1]{Sj08} and Bony \cite[Th\'eor\`eme 2.8]{Bo94}.

\medskip
\noindent
{\it Remark}. Since the Schwartz space $\mathscr S(\mathbb R^n)$ is not dense in $M^{\infty}(\mathbb R^n)$, the precise boundedness statement (\ref{eq1.5}) for $p = \infty$ is as follows: there exists a constant $C>0$ such that
\[
\norm{a^w u}_{M^{\infty}(\mathbb R^n)} \leq C \norm{a}_{\widetilde{S}(m)}\, \norm{\mathcal M}_{\mathcal L(L^{\infty}(E), L^{\infty}(E))} \norm{u}_{M^{\infty}(\mathbb R^n)}, \quad u\in \mathscr S(\mathbb R^n).
\]

\bigskip
\noindent
Regarding the composition of pseudodifferential operators with symbols in $\widetilde{S}(m)$--spaces, we have the following result, which is only a minor sharpening of~\cite[Theorem 4.2]{Sj08}, the only possibly new point here being the uniqueness and continuity properties of the Weyl product extended to $\widetilde{S}(m)$--spaces. See also~\cite[Remark 4.3]{Sj08}, where the possibility of such a sharpening is suggested. When stating the result, it will be convenient to use the following notion of sequential continuity. Let $E = \mathbb R^d$ and let $m_j$, $1\leq j \leq 3$, be order functions on $E \times E^*$. We say that a bilinear map
\[
B: \widetilde{S}(m_1) \times \widetilde{S}(m_2) \rightarrow \widetilde{S}(m_3)
\]
is weakly sequentially continuous if given bounded sequences $u_j \in \widetilde{S}(m_1)$, $v_j \in \widetilde{S}(m_2)$, such that $u_j \rightarrow u \in \widetilde{S}(m_1)$, $v_j \rightarrow v \in \widetilde{S}(m_2)$ in $\mathscr S'(E)$, it is true that $B(u_j,v_j)$ is bounded in $\widetilde{S}(m_3)$ and $B(u_j,v_j) \rightarrow B(u,v)$ in $\mathscr S'(E)$.

\begin{theo}
\label{theo_compos}
Let $E = T^*\mathbb R^n$ and let $m_1$, $m_2$ be order functions on $E\times E^*$. Let us set, recalling the linear bijection $q$ in {\rm (\ref{eq1.3.1})},
\begeq
\label{eq1.6}
m_3(q(x,y)) = \int_E m_1(q(x,z))\, m_2(q(z,y))\, dz.
\endeq
Assume that the integral in {\rm (\ref{eq1.6})} converges for at least one value of $(x,y) \in E \times E$. Then it converges for all values and defines an order function $m_3$ on $E\times E^*$. Furthermore, the Weyl composition map
\[
\mathscr S(E) \times \mathscr S(E) \ni (a_1,a_2) \mapsto a_1 \# a_2 \in \mathscr S(E),
\]
defined by $a_1^w\circ a_2^w = (a_1 \# a_2)^w$, has a unique bilinear weakly sequentially continuous extension
\[
\widetilde{S}(m_1) \times \widetilde{S}(m_2)\ni (a_1, a_2) \rightarrow a_1 \# a_2 \in \widetilde{S}(m_3),
\]
which is also norm continuous,
\[
\norm{a_1 \# a_2}_{\widetilde{S}(m_3)} \leq C\,\norm{a_1}_{\widetilde{S}(m_1)} \norm{a_2}_{\widetilde{S}(m_2)}, \quad C > 0.
\]
For $a_j \in \widetilde{S}(m_j)$, $j=1,2$, the composition $a_1^w \circ a_2^w: \mathscr S(\mathbb R^n) \rightarrow \mathscr S'(\mathbb R^n)$ is  well defined and the Weyl symbol is given by $a_1 \# a_2$.
\end{theo}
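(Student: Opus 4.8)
The plan is to argue in three stages: the order function property of $m_3$, the basic norm estimate on Schwartz symbols, and the uniqueness and continuity of the extension, which carries the new content. First I would check that $m_3$ is well defined and is an order function. Since $q$ in (\ref{eq1.3.1}) is a linear bijection, each weight $m_i\circ q$ inherits from (\ref{eq1.0}) a bound of the form $m_i(q(x,z))\le C_0\langle q(x,z)-q(x',z')\rangle^{N_0}m_i(q(x',z'))$, and $\langle q(\,\cdot\,)\rangle$ is comparable to $\langle\,\cdot\,\rangle$ because $q$ is linear. The decisive simplification is that $q(x,z)-q(x_0,z)=q(x-x_0,0)$ and $q(z,y)-q(z,y_0)=q(0,y-y_0)$ are both independent of the integration variable $z$. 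Comparing the integrand in (\ref{eq1.6}) at a general $(x,y)$ with the one at a point $(x_0,y_0)$ where convergence is assumed, I can therefore bound it by a $z$-independent factor $C\langle x-x_0\rangle^{N_0}\langle y-y_0\rangle^{N_0}$ times the integrand at $(x_0,y_0)$, which yields convergence for all $(x,y)$ at once. The same computation, combined with $\langle x-x'\rangle,\langle y-y'\rangle\lesssim\langle q(x,y)-q(x',y')\rangle$, gives the order function inequality (\ref{eq1.0}) for $m_3$.

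Next, for $a_1,a_2\in\mathscr S(E)$ I would establish the norm bound $\norm{a_1\#a_2}_{\widetilde S(m_3)}\le C\norm{a_1}_{\widetilde S(m_1)}\norm{a_2}_{\widetilde S(m_2)}$, which is essentially \cite[Theorem 4.2]{Sj08}. The structural reason the weights compose by the matrix-multiplication formula (\ref{eq1.6}) is that $(a_1\#a_2)^w=a_1^w\circ a_2^w$ amounts, at the level of Schwartz kernels, to the composition $K_{a_1\#a_2}(x,y)=\int K_{a_1}(x,z)K_{a_2}(z,y)\,dz$, and $q$ is precisely the change of variables from kernel coordinates to Weyl symbol coordinates. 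Quantitatively, using the covariance of the auxiliary Weyl quantization under the magnetic translations $\mathcal T_\gamma$, so that $\chi_\gamma^w=\mathcal T_\gamma\,\chi^w\,\mathcal T_\gamma^{-1}$, I would reduce $m_3(\gamma)^{-1}\norm{\chi_\gamma^w(a_1\#a_2)}_{L^2}$ to an integral over $\alpha,\beta\in\Gamma$ of the products $\norm{\chi_\alpha^w a_1}_{L^2}\norm{\chi_\beta^w a_2}_{L^2}$ against an almost-orthogonality kernel with rapid off-diagonal decay supplied by the oscillatory factor in the Weyl product, and then apply a Schur test adapted to the weight $m_3$ to close the estimate with a constant uniform in $\gamma$.

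Finally I would pass to general symbols, where the two soft ingredients are lower semicontinuity of the norm and bounded approximation. For the former: if $b_j\to b$ in $\mathscr S'(E)$ with $\sup_j\norm{b_j}_{\widetilde S(m_3)}<\infty$, then for each fixed $\gamma$ one has $\chi_\gamma^w b_j\to\chi_\gamma^w b$ in $\mathscr S'$ while the $\chi_\gamma^w b_j$ stay bounded in $L^2$, hence converge weakly in $L^2$, so $\norm{\chi_\gamma^w b}_{L^2}\le\liminf_j\norm{\chi_\gamma^w b_j}_{L^2}$; dividing by $m_3(\gamma)$ and taking the supremum gives $b\in\widetilde S(m_3)$ with $\norm{b}_{\widetilde S(m_3)}\le\liminf_j\norm{b_j}_{\widetilde S(m_3)}$. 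For the latter: every $a\in\widetilde S(m)$ is the $\mathscr S'$-limit of a sequence $a_k\in\mathscr S(E)$ bounded in $\widetilde S(m)$, obtained by mollifying and smoothly truncating, these operations being uniformly bounded on $\widetilde S(m)$ by the structural results of \cite[Proposition 2.2]{Sj08}. Given $a_i\in\widetilde S(m_i)$ with such approximants $a_{i,k}$, the products $a_{1,k}\#a_{2,k}$ are bounded in $\widetilde S(m_3)$ by the previous step and hence lie in an $\mathscr S'$-sequentially compact set; I would define $a_1\#a_2$ as their $\mathscr S'$-limit, checking independence of the approximants by interlacing two such sequences and invoking weak sequential continuity on $\mathscr S(E)\times\mathscr S(E)$. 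Norm continuity of the extension then follows from lower semicontinuity, and uniqueness is immediate, since any weakly sequentially continuous extension must agree on these limits. The well-definedness of $a_1^w\circ a_2^w:\mathscr S(\mathbb R^n)\to\mathscr S'(\mathbb R^n)$ with Weyl symbol $a_1\#a_2$ I would obtain by verifying the symbol identity on the approximants $a_{i,k}$ and passing to the limit, the convergence of the underlying kernel composition being exactly the standing hypothesis on (\ref{eq1.6}).

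As for the main obstacle, the norm estimate of the second step is the technical heart, but since it is essentially \cite[Theorem 4.2]{Sj08} I expect the genuinely delicate new point to be the functional-analytic scaffolding of the third step: constructing a bounded-in-$\widetilde S(m)$ approximation converging in $\mathscr S'$, and securing the $\mathscr S'$-compactness needed both to define the extension and to pin it down uniquely. This is what upgrades the Schwartz-level inequality to a unique, continuous bilinear map, and it is where the interplay between the $\ell^\infty$-type norm of $\widetilde S(m)$ and weak-$*$ convergence must be handled with care.
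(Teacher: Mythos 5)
Your first stage is correct and is essentially the paper's (unwritten) argument for the claim that (\ref{eq1.6}) converges everywhere and defines an order function: since $q$ is linear, $q(x,z)-q(x_0,z)=q(x-x_0,0)$ and $q(z,y)-q(z,y_0)=q(0,y-y_0)$ are independent of the integration variable, so the integrand at $(x,y)$ is dominated by a $z$-free factor times the integrand at $(x_0,y_0)$. Your second stage is acceptable as far as it goes: the Schwartz-level norm bound is indeed \cite[Theorem 4.2]{Sj08}, the paper claims no novelty there, and your heuristic that $q$ is the change of variables from kernel coordinates to Weyl coordinates is exactly what the paper's computation of $\kappa_U$ in (\ref{eq4.9.6}) makes precise.

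The genuine gap is in your third stage, and it sits precisely where you yourself locate the difficulty. You propose to \emph{define} $a_1\# a_2$ for $a_i\in\widetilde{S}(m_i)$ as the $\mathscr S'$-limit of $a_{1,k}\#a_{2,k}$ along bounded Schwartz approximants, and to settle independence of the approximants ``by interlacing two such sequences and invoking weak sequential continuity on $\mathscr S(E)\times\mathscr S(E)$''. But the statement you invoke --- that for Schwartz sequences bounded in $\widetilde{S}(m_1)\times\widetilde{S}(m_2)$ and convergent only in $\mathscr S'$ the products converge in $\mathscr S'$ --- is not continuity of $\#$ in the Schwartz topology (your approximants do not converge in $\mathscr S(E)$), and it is exactly the nontrivial claim your construction needs; the interlacing trick merely reduces ``independence of approximants'' to ``convergence of one interlaced sequence'', which is the same unproven assertion. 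Boundedness in $\widetilde{S}(m_3)$ gives subsequential weak-$*$ limits, but nothing in your proposal identifies them. What is missing is an absolutely convergent representation of the pairing $(a_1\#a_2,\varphi)$, $\varphi\in\mathscr S(E)$, whose integrand is controlled pointwise by the $\widetilde{S}(m_i)$-norms alone and is therefore amenable to dominated convergence. This is what the paper supplies: it characterizes $a\in\widetilde{S}(m)$ by the pointwise bound (\ref{eq4.11}) on the effective kernel of $a^w$ on the Bargmann side, writes $(a_{1,j}\#a_{2,j},\varphi)$ as the triple integral (\ref{eq4.30}) of a product of effective kernels against a rapidly decaying test function, and dominates the integrand by the $L^1$ function (\ref{eq4.31}), integrability being exactly the hypothesis on (\ref{eq1.6}). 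This also changes the architecture: the paper defines $a_1\#a_2$ \emph{directly} for $a_i\in\widetilde{S}(m_i)$ by composing effective kernels as in (\ref{eq4.24}), so no limit has to be shown to exist, and density (Proposition \ref{density_prop}) enters only afterwards, to prove \emph{uniqueness} of the weakly sequentially continuous extension. The same pointwise kernel estimates are what make the final claim about $a_1^w\circ a_2^w$ on $\mathscr S(\mathbb R^n)$ rigorous (via Theorem \ref{theo_bound_symbol} and the factorization (\ref{eq4.19})--(\ref{eq4.22})); your remark that ``the convergence of the underlying kernel composition is the standing hypothesis'' is the right intuition, but your real-domain lattice setup does not by itself produce the kernel bounds needed to turn it into an argument.
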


\medskip
\noindent
The proofs of Theorems \ref{theo_bounded} and \ref{theo_compos} follow the arguments of~\cite{Sj08} closely and in particular, an important role throughout is played by the techniques of metaplectic FBI-Bargmann transforms \cite{Sj96}, \cite{HiSj15}, \cite[Chapter 13]{Zw12}. We also make use of the rank one decomposition of pseudodifferential operators on the Bargmann transform side, established in~\cite{HiLaSjZe}, representing a pseudodifferential operator in the complex domain as an integral of a family of rank one operators, expressed in terms of unitary magnetic translations acting on coherent states. The work~\cite{HiLaSjZe} was concerned with continuity properties of semiclassical Gevrey pseudodifferential operators in the complex domain, acting on exponentially weighted spaces of entire functions, but the scope of the rank one decomposition is in fact quite general and we take advantage of it here.

\medskip
\noindent
The plan of this note is as follows. In Section \ref{sec_prelim}, we prove a natural density result for the space $\widetilde{S}(m)$ and recall the rank one decomposition for Weyl quantization on the FBI-Bargmann transform side, obtained in~\cite{HiLaSjZe}. Theorem \ref{theo_bounded} is then established in Section \ref{sect_bounded}, and Section \ref{sect_compos} is devoted to the proof of Theorem \ref{theo_compos}. In Appendix \ref{modFBI}, we recall the definition of modulation spaces by means of metaplectic FBI-Bargmann transforms and show their independence of the choice of such a transform.

\medskip
\noindent
{\bf Acknowledgments}. We are very grateful to Johannes Sj\"ostrand for stimulating discussions and for encouraging us to pursue this study.

\section{Symbol spaces and pseudodifferential operators}
\label{sec_prelim}
\setcounter{equation}{0}

\medskip
\noindent
Let $E = \mathbb R^d$. We shall first establish the following natural density result for the symbol space $\widetilde{S}(m)$, suggested in~\cite[Remark 4.3]{Sj08}.

\begin{prop}
\label{density_prop}
Let $u\in \widetilde{S}(m)$. There exists a sequence $u_\nu \in \mathscr S(E)$, $\nu = 1,2,\ldots $ converging to $u$ in $\mathscr S'(E)$, which is bounded in the Banach space $\widetilde{S}(m)$.
\end{prop}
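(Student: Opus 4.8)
The plan is to regularize by a phase-space cut-off adapted to the partition (\ref{eq1.1}). Since changing $\chi$ only replaces the norm on $\widetilde{S}(m)$ by an equivalent one \cite[Proposition 2.2]{Sj08}, I may assume $0\le\chi\in C_c^\infty(E\times E^*)$: starting from any $0\le\psi\in C_c^\infty$ with $\sum_\gamma\tau_\gamma\psi>0$ and setting $\chi=\psi/\sum_\gamma\tau_\gamma\psi$ produces a compactly supported $\chi$ with $\sum_\gamma\tau_\gamma\chi=1$, because the denominator is smooth, positive and $\Gamma$-periodic. I then define
\[
u_\nu=\sum_{\abs{\gamma}\le\nu}\chi_\gamma^w u,\qquad \nu=1,2,\dots.
\]
The Schwartz kernel of $\chi_\gamma^w$ is $(2\pi)^{-n}\int e^{i(x-y)\cdot\theta}\chi_\gamma\!\left(\tfrac{x+y}{2},\theta\right)d\theta$, a partial Fourier transform of a Schwartz function, hence an element of $\mathscr S(E\times E)$; consequently $\chi_\gamma^w$ maps $\mathscr S'(E)$ into $\mathscr S(E)$, and each $u_\nu$, being a finite sum of such images, lies in $\mathscr S(E)$.

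Next I would verify that $u_\nu\to u$ in $\mathscr S'(E)$. As $\chi_\gamma\ge0$ is real, $(\chi_\gamma^w)^*=\chi_\gamma^w$, so for $\phi\in\mathscr S(E)$ one has $\langle u_\nu,\phi\rangle=\langle u,\sum_{\abs{\gamma}\le\nu}\chi_\gamma^w\phi\rangle$, and it suffices to show $\sum_{\abs{\gamma}\le\nu}\chi_\gamma^w\phi\to\phi$ in $\mathscr S(E)$. Writing $r_\nu:=1-\sum_{\abs{\gamma}\le\nu}\chi_\gamma$, so that $\phi-\sum_{\abs{\gamma}\le\nu}\chi_\gamma^w\phi=r_\nu^w\phi$, the symbol $r_\nu$ vanishes on a ball of radius $\sim\nu$ about the origin (because $\chi$ is compactly supported and $\sum_\gamma\tau_\gamma\chi=1$), and a standard non-stationary phase estimate for the kernel of $r_\nu^w$ shows that $r_\nu^w\phi\to0$ in every Schwartz seminorm. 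This yields the desired convergence.

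The substance of the proof is the uniform bound $\sup_\nu\norm{u_\nu}_{\widetilde{S}(m)}<\infty$. By the definition of the norm via (\ref{eq1.2}) this amounts to controlling $\tfrac{1}{m(\gamma')}\norm{\chi_{\gamma'}^w u_\nu}_{L^2(E)}$ uniformly in $\nu$ and $\gamma'$, and by the triangle inequality it is enough to prove, uniformly,
\begin{equation*}
\sum_{\gamma\in\Gamma}\norm{\chi_{\gamma'}^w\chi_\gamma^w u}_{L^2(E)}\le C\,m(\gamma')\,\norm{u}_{\widetilde{S}(m)}.
\end{equation*}
The key input is the almost-orthogonality estimate
\begin{equation*}
\norm{\chi_{\gamma'}^w\chi_\gamma^w u}_{L^2(E)}\le C_M\,\langle\gamma-\gamma'\rangle^{-M}\,m(\gamma)\,\norm{u}_{\widetilde{S}(m)},\qquad\forall M>0.\tag{$\dagger$}
\end{equation*}
Granting ($\dagger$), I insert the order-function bound $m(\gamma)\le C_0\langle\gamma-\gamma'\rangle^{N_0}m(\gamma')$ from (\ref{eq1.0}) and sum: choosing $M>N_0+2d$ makes $\sum_{\gamma\in\Gamma}\langle\gamma-\gamma'\rangle^{-M+N_0}$ finite and independent of $\gamma'$ by translation invariance of the lattice, which gives the displayed bound and hence the uniform $\widetilde{S}(m)$-boundedness of $(u_\nu)$.

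The hard part is ($\dagger$), specifically the decay in $\abs{\gamma-\gamma'}$. Writing $\chi_{\gamma'}^w\chi_\gamma^w=(\chi_{\gamma'}\#\chi_\gamma)^w$, the Weyl symbol $\chi_{\gamma'}\#\chi_\gamma$ is Schwartz with all seminorms $O_M(\langle\gamma-\gamma'\rangle^{-M})$, which bounds the $L^2$ operator norm; the delicate point is that $u\notin L^2(E)$, so one cannot simply apply this operator norm to $u$, and the decay must instead exploit the phase-space localization of $\chi_\gamma^w u$ itself near $\gamma$. This is exactly where the FBI--Bargmann transform is decisive: transported to the Bargmann side, $\chi_\gamma^w$ becomes, up to the rapidly decaying errors recalled in this section, a Toeplitz multiplier by $\chi_\gamma$, so that $\chi_{\gamma'}^w\chi_\gamma^w u$ corresponds to a product of cut-offs centred at $\gamma'$ and $\gamma$ acting on the transform of $u$; the off-diagonal Gaussian decay of the relevant kernel produces the factor $\langle\gamma-\gamma'\rangle^{-M}$, while the local mass $\norm{\chi_\gamma^w u}_{L^2(E)}\le m(\gamma)\norm{u}_{\widetilde{S}(m)}$ supplies the factor $m(\gamma)$. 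Equivalently, ($\dagger$) can be read off from the almost-orthogonality estimates already established in \cite{Sj08}. I expect this localization estimate, rather than the (routine) construction and the $\mathscr S'$-convergence, to be the crux. Finally, note that genuine norm density cannot be expected, since $\widetilde{S}(1)=M^\infty(E)$ does not contain $\mathscr S(E)$ as a dense subspace; this is consistent with the statement requiring only boundedness together with convergence in $\mathscr S'$.
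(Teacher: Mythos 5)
Your argument is correct in outline, but it takes a genuinely different route from the paper. You approximate $u$ by the partial sums $u_\nu=\sum_{\abs{\gamma}\le \nu}\chi_\gamma^w u$ of the Weyl-quantized partition of unity, and the whole weight of the proof falls on the almost-orthogonality estimate $(\dagger)$, i.e.\ $\norm{(\chi_{\gamma'}\#\chi_\gamma)^w u}_{L^2}\le C_M\langle{\gamma-\gamma'\rangle}^{-M}m(\gamma)\norm{u}_{\widetilde{S}(m)}$; once that is granted, your reduction, the summation using (\ref{eq1.0}), and the $\mathscr S'$-convergence via $r_\nu^w\phi\to 0$ in $\mathscr S$ are all sound. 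The paper instead avoids any pseudodifferential almost-orthogonality: it sets $u_\nu=\psi(x/\nu)\,(u*\varphi_{1/\nu})$ with $\psi\in\mathscr S$, $\psi(0)=1$, and $\varphi\in C_0^\infty$, $\int\varphi=1$, and works with the equivalent characterization of $\widetilde{S}(m)$ by a \emph{multiplicative} compactly supported partition of unity, $\abs{\widehat{\chi_j u}(x^*)}\le Cm(j,x^*)$ from \cite[Proposition 2.4]{Sj08}; the uniform bound then follows from elementary convolution estimates against the order function. What each approach buys: the paper's is short and self-contained modulo \cite[Proposition 2.4]{Sj08}, whereas yours is conceptually natural (the approximants are built from the localized pieces $\chi_\gamma^w u$ themselves) but requires heavier machinery.

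The one point you should tighten is $(\dagger)$. As you correctly observe, $u\notin L^2(E)$, so the $\mathcal L(L^2)$-norm bound $\mathcal O_M(\langle{\gamma-\gamma'\rangle}^{-M})$ for $(\chi_{\gamma'}\#\chi_\gamma)^w$ cannot be applied directly; what you actually need is the statement that for a symbol $b$ satisfying $\abs{\partial^\alpha b(X)}\le D_{\alpha,N}\langle{X-X_0\rangle}^{-N}$ one has $\norm{b^w u}_{L^2}\le C(D)\,m(X_0)\norm{u}_{\widetilde{S}(m)}$, applied to $b=\chi_{\gamma'}\#\chi_\gamma$ with $X_0=\gamma$, whose seminorm constants carry the extra factor $\langle{\gamma-\gamma'\rangle}^{-M}$. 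This localization estimate is indeed essentially what underlies the proof of the norm-equivalence result \cite[Proposition 2.2]{Sj08} (and can be read off on the Bargmann side from the pointwise bound on $\mathcal Tu$ of \cite[Proposition 5.1]{Sj08} together with the Gaussian off-diagonal decay of the effective kernel), so your deferral is legitimate; but as written it is a sketch rather than a proof, and it is precisely the step the paper's more elementary argument is designed to bypass.
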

\begin{proof}
We shall work with translation invariant partitions of unity of compact support. Let $J \subseteq E$ be a lattice and let $0\leq \chi_0 \in C_0^\infty(E)$ be such that
\begeq
\label{eq2.1}
\sum_{j\in J} \chi_j = 1, \quad \chi_j(x) = \chi_0(x-j).
\endeq
Given $u \in \widetilde{S}(m)$, we claim that we can find a sequence $u_\nu \in \mathscr S(E)$, $\nu = 1,2,\ldots $ converging to $u$ in $\mathscr S'(E)$ such that,
\begin{equation}
\label{eq2.2}
|\widehat{\chi_j u_\nu}(x^*)| \leq C\,m(j,x^*), \quad j\in J, \,\,x^* \in E^*,\,\,\,\nu =1,2,\ldots
\end{equation}
Here our constants $C>0$ may change from line to line and may depend on $u$, but never on $\nu$. The fact that $u_{\nu}$ is then bounded in $\widetilde{S}(m)$ follows from the proof of~\cite[Proposition 2.4]{Sj08}.

\medskip
\noindent
When verifying the claim, we proceed as in~\cite[Section 1]{Sj94}. Indeed, let $\psi \in \mathscr S(E)$ be such that $\psi(0) = 1$,
let $0 \leq \varphi \in C_0^\infty(E)$ satisfy $\int\varphi = 1$, and let us define, with $*$ denoting convolution,
\begeq
\label{eq2.2.1}
u_\nu = \psi\left(\frac{x}{\nu}\right)\left(u * \varphi_{\frac{1}{\nu}}\right) \in \mathscr S(E), \quad \nu = 1,2,\ldots
\endeq
Here $\varphi_{\frac{1}{\nu}}(x) = \nu^d \varphi(\nu x)$. We have $u_\nu \rightarrow u$ in $\mathscr S'(E)$, and to prove (\ref{eq2.2}) we first observe that we have, in view of (\ref{eq2.1}),
\[
\chi_j (u * \varphi_{\frac{1}{\nu}}) = \chi_j\left(\sum_{|k-j|\leq C}(\chi_k u) * \varphi_{\frac{1}{\nu}}\right),
\]
for some constant $C > 0$. Therefore,
\begeq
\label{eq2.3}
\mathcal{F}(\chi_j (u * \varphi_{\frac{1}{\nu}})) = (2\pi)^{-d} \widehat{\chi}_j * \left(\widehat{\varphi}_{\frac{1}{\nu}} \sum_{|k-j|\leq C} \widehat{\chi_k u}\right).
\endeq
When estimating (\ref{eq2.3}), we notice that $|\widehat{\chi}_j|=|\widehat{\chi}_0|$,
$$
\|\widehat{\varphi}_{\frac{1}{\nu}}\|_{L^\infty(E)} \leq \|\varphi_{\frac{1}{\nu}}\|_{L^1(E)} = \|\varphi\|_{L^1(E)} = 1,
$$
and also, using~\cite[Proposition 2.4]{Sj08} and the fact that $m$ is an order function, we get
\[
|\widehat{\chi_k u}(x^*)| \leq Cm(k, x^*) \leq C C_0 \langle k-j\rangle^{N_0} m(j, x^*) \\
\implies \abs{\sum_{|k-j|\leq C} \widehat{\chi_k u}(x^*)} \leq Cm(j, x^*).
\]
It follows that
\[
\abs{\mathcal{F}(\chi_j(u * \varphi_{\frac{1}{\nu}}))} \leq C \abs{\widehat{\chi}_0} * m(j, \cdot),
\]
and thus, recalling (\ref{eq2.2.1}), we get
\begeq
\label{eq2.4}
\abs{\widehat{\chi_j u_\nu}} \leq C \abs{\nu^d \widehat{\psi}(\nu \cdot)} * \abs{\widehat{\chi}_0} * m(j, \cdot).
\endeq
Here we observe, using that $\widehat{\chi}_0$ is Schwartz and that $m$ is an order function,
\begin{multline}
\label{eq2.5}
(\abs{\widehat{\chi}_0} * m(j,\cdot))(x^*) = \int_{E^*} \abs{\widehat{\chi}_0(y^*)} m(j, x^* - y^*)\, dy^* \\
\leq C\int_{E^*} \langle y^*\rangle^{-N_0-d-1} (m(j, x^*) \langle y^*\rangle^{N_0})\, dy^* = Cm(j, x^*),
\end{multline}
and thus we get, using (\ref{eq2.4}), (\ref{eq2.5}), and the rapid decay of $\widehat{\psi}$,
\begin{multline*}
\abs{\widehat{\chi_j u_\nu}(x^*)} \leq C \left(\abs{\nu^d \widehat{\psi}(\nu \cdot)} * m(j,\cdot)\right)(x^*) \leq
C \int_{E^*} |\nu^d \widehat{\psi}(\nu y^*)| m(j, x^* - y^*)\, dy^*
\\ \leq C\int_{E^*} (\nu^d \langle\nu y^*\rangle^{-N_0-d-1}) (m(j, x^*) \langle y^*\rangle^{N_0})\, dy^*
\\ \leq C\int_{E^*} (\nu^d\langle\nu y^*\rangle^{-N_0-d-1}) (m(j,x^*) \langle \nu y^*\rangle^{N_0})\, dy^* = Cm(j, x^*).
\end{multline*}
The proof is complete.
\end{proof}

\medskip
\noindent
{\it Remark}. Let $u_{\nu}$ be a bounded sequence in $\widetilde{S}(m)$, such that $u_{\nu} \rightarrow u$ in $\mathscr S'(E)$, for some $u \in \mathscr S'(E)$. It follows from~\cite[Proposition 5.1]{Sj08} that $u \in \widetilde{S}(m)$, with $\norm{u}_{\widetilde{S}(m)} \leq
\mathcal O(1)\, \liminf \, \norm{u_{\nu}}_{\widetilde{S}(m)}$.

\bigskip
\noindent
Next, given $a\in \mathscr S'(T^*\mathbb R^n)$, we shall recall the representation of the Weyl quantization $a^w$ of $a$ as a superposition of rank one operators on the FBI-Bargmann transform side, established in~\cite[Section 4]{HiLaSjZe}. When doing so, let $\varphi$ be a holomorphic quadratic form on $\mathbb C^n_x \times \mathbb C^n_y$ such that
\begeq
\label{eq2.6}
{\rm det}\, \varphi''_{xy} \neq 0, \quad {\rm Im}\, \varphi''_{yy}>0.
\endeq
Associated to $\varphi$ is the complex linear canonical transformation
\begeq
\label{eq2.7}
\kappa: T^* \mathbb C^{n} \ni (y,-\varphi'_y(x,y)) \mapsto (x,\varphi'_x(x,y)) \in T^*\mathbb C^{n},
\endeq
and the metaplectic FBI-Bargmann transform
\begeq
\label{eq2.8}
\mathcal T u(x) = C \int_{\mathbb R^n} e^{i\varphi(x,y)}\, u(y)\, dy,
\endeq
which is unitary,
\begeq
\label{eq2.9}
\mathcal T: L^2(\mathbb R^n) \rightarrow H_{\Phi}(\mathbb C^n),
\endeq
for a suitable choice of $C>0$ in (\ref{eq2.8}), see \cite[Section 1]{Sj96}, \cite[Section 12.2]{Sj02}, \cite[Section 1.3]{HiSj15}. Here
\begeq
\label{eq2.9.1}
H_{\Phi}(\mathbb C^n) = {\rm Hol}(\mathbb C^n) \cap L^2(\mathbb C^n, e^{-2\Phi}L(dx)),
\endeq
with $L(dx)$ being the Lebesgue measure on $\mathbb C^n$, and $\Phi$ is a strictly plurisubharmonic quadratic form on $\mathbb C^n$, given by
\begeq
\label{eq2.10}
\Phi(x) = \sup_{y \in \mathbb R^n} \left(-{\rm Im}\, \,\varphi(x,y)\right).
\endeq
We have furthermore, in view of~\cite[Proposition 1.3.2]{HiSj15},
\begeq
\label{eq2.11}
\kappa\left(T^*\mathbb R^{n}\right) = \Lambda_{\Phi} := \left\{\left(x,\frac{2}{i}\frac{\partial \Phi}{\partial x}(x)\right); x\in \mathbb C^n\right \} \subseteq \mathbb C^{2n},
\endeq
and it follows that the restriction of the complex symplectic (2,0)--form
\begeq
\label{eq2.12}
\sigma = \sum_{j=1}^n d\xi_j \wedge dx_j
\endeq
on $\mathbb C^{2n} = \mathbb C^n_{x} \times \mathbb C^n_{\xi}$ to the $2n$--dimensional real linear subspace $\Lambda_{\Phi}$ is real and non-degenerate. The restriction of a complex linear form $\ell(x,\xi) = \ell'_x \cdot x + \ell'_{\xi} \cdot \xi$ on $\mathbb C^{2n}$ to $\Lambda_{\Phi}$ is therefore real precisely when the Hamilton vector field $H_{\ell} = \ell'_{\xi}\cdot \partial_x -\ell'_x\cdot \partial_{\xi}$ of $\ell$ satisfies
\begeq
\label{eq2.12.0.1}
H_{\ell} \in \Lambda_{\Phi} \iff -\ell'_x = \frac{2}{i}\frac{\partial \Phi}{\partial x}(\ell'_{\xi}).
\endeq
We may write therefore
\begeq
\label{eq2.12.1}
\ell(x,\xi) = \ell'_x\cdot x + \ell'_{\xi}\cdot \xi = \sigma((x,\xi),H_{\ell}) = -\frac{2}{i}\frac{\partial \Phi}{\partial x}(x^*)\cdot x + x^* \cdot \xi,\quad (x,\xi) \in \mathbb C^{2n},
\endeq
for some unique $\mathbb C^n \ni x^* = \ell'_{\xi}$. Associated to the complex linear form $\ell$ in (\ref{eq2.12.1}) and the complex canonical transformation
\[
\exp(H_{\ell})(\rho) = \rho + H_{\ell},\quad \rho \in \mathbb C^{2n},
\]
is the magnetic translation operator $e^{-i\ell(x,D_x)} = e^{-i\sigma((x,D_x),H_{\ell})}$, unitary on $H_{\Phi}(\mathbb C^n)$, given by
\begeq
\label{eq2.12.2}
e^{-i\sigma((x,D_x),H_{\ell})} = e^{-\frac{i}{2} \ell'_x\cdot x} \circ \tau_{\ell'_{\xi}} \circ e^{-\frac{i}{2} \ell'_x\cdot x},
\endeq
see~\cite[Proposition 1.4]{Sj96},~\cite[Section 2]{HiLaSjZe}. Here $\tau_z$ is the operator of translation by $z\in \mathbb C^n$, $(\tau_z u)(x) = u(x-z)$.

\bigskip
\noindent
Let $e_0(x) = \pi^{-n/4} e^{-x^2/2}$, $x\in \mathbb R^n$, be the $L^2$--normalized real Gaussian, and let $f_0$ be the Weyl symbol of the orthogonal projection $L^2(\mathbb R^n) \ni u \mapsto (u,e_0)_{L^2(\mathbb R^n)}\, e_0$ onto $\mathbb C e_0$. We have
\begeq
\label{eq2.13}
f_0(x,\xi) = \int e^{-iy\cdot \xi} e_0\left(x + \frac{y}{2}\right) e_0\left(x - \frac{y}{2}\right)\, dy = 2^n e^{-(x^2 + \xi^2)}, \quad (x,\xi) \in T^* \mathbb R^n,
\endeq
and let us introduce, passing to the FBI-Bargmann transform side by means of (\ref{eq2.7}), (\ref{eq2.11}),
\begeq
\label{eq2.14}
\chi_0 = f_0 \circ \kappa^{-1} \in \mathscr S(\Lambda_{\Phi}), \quad \chi_T(X) = (\tau_T \chi_0)(X) = \chi_0(X-T), \quad X,T \in \Lambda_{\Phi}.
\endeq
Define also
\[
V_0 = \mathcal T e_0 \in H_{\Phi}(\mathbb C^n),
\]
and introduce rank one operators on $H_{\Phi}(\mathbb C^n)$,
\begeq
\label{eq2.14.1}
\Pi_{Y,T} U = (U, e^{i\sigma((x,D_x),T)}V_0)_{H_{\Phi}(\mathbb C^n)}\, e^{i\sigma((x,D_x),Y)}V_0,\quad Y,T \in \Lambda_{\Phi}.
\endeq

\begin{theo}
\label{rank_one}
Let $a\in \mathscr S'(T^*\mathbb R^n)$ and define $b = a\circ \kappa^{-1} \in \mathscr S'(\Lambda_{\Phi})$. We have for all $u,v\in \mathscr S(\mathbb R^n)$,
\begin{multline}
\label{eq2.15}
(a^w u,v)_{L^2(\mathbb R^n)}:= \langle{a^w u, \overline{v}\rangle}_{\mathscr S'(\mathbb R^n), \mathscr S(\mathbb R^n)} \\
= \frac{1}{2^{n} (2\pi)^{2n}}\int\!\!\!\int_{(\Lambda_{\Phi})^2} e^{\frac{i\sigma(Y,T)}{2}}
\mathcal F_{\sigma}\left(\chi_{-\frac{Y+T}{2}} b\right)\left(\frac{Y-T}{2}\right)(\Pi_{Y,T}\mathcal Tu, \mathcal Tv)_{H_{\Phi}(\mathbb C^n)}\, dY\, dT.
\end{multline}
Here $\mathcal F_{\sigma}$ is the symplectic Fourier transformation on the real symplectic vector space $(\Lambda_{\Phi}, \sigma|_{\Lambda_{\Phi}})$,
\begeq
\label{eq2.16}
\mathcal F_{\sigma} u(X) = \frac{1}{\pi^n} \int_{\Lambda_{\Phi}} e^{2i\sigma(X,Y)} u(Y)\, dY, \quad X \in \Lambda_{\Phi},
\endeq
with $dY$ being the symplectic volume form on $\Lambda_{\Phi}$. The integral in {\rm (\ref{eq2.15})} converges absolutely.
\end{theo}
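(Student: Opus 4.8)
The plan is to transport the identity to $L^2(\mathbb{R}^n)$ through the unitary FBI--Bargmann transform $\mathcal{T}$ and to read the stated formula as a coherent-state reconstruction of $a^w$. Writing $\rho_Y = \kappa^{-1}(Y)$, $\rho_T = \kappa^{-1}(T) \in T^*\mathbb{R}^n$ and letting $U_\rho$ denote the Heisenberg--Weyl (phase-space translation) operator on $L^2(\mathbb{R}^n)$, the magnetic translations (\ref{eq2.12.2}) are intertwined by $\mathcal{T}$ with the $U_\rho$ through $\kappa$, so that $\phi_{\rho_Y} := \mathcal{T}^{-1}V_Y = U_{\rho_Y} e_0$ is the coherent state obtained by translating the vacuum $e_0$. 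Hence the rank one operator $\Pi_{Y,T}$ of (\ref{eq2.14.1}) is $\mathcal{T}\,|\phi_{\rho_Y}\rangle\langle\phi_{\rho_T}|\,\mathcal{T}^{-1}$, and since $(\Pi_{Y,T}\mathcal{T}u,\mathcal{T}v)_{H_\Phi} = (|\phi_{\rho_Y}\rangle\langle\phi_{\rho_T}|\,u,v)_{L^2}$, the whole statement reduces to an operator identity on $L^2(\mathbb{R}^n)$ expressing $a^w$ as a superposition of the $|\phi_{\rho_Y}\rangle\langle\phi_{\rho_T}|$, the Bargmann transform serving only to package the coherent states and the symplectic geometry. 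I would prove this by inserting the coherent-state resolution of the identity $\frac{1}{(2\pi)^n}\int|\phi_\rho\rangle\langle\phi_\rho|\,d\rho = \mathrm{Id}$ on both sides of $a^w$, yielding $a^w = \frac{1}{(2\pi)^{2n}}\iint (\phi_{\rho_1}, a^w\phi_{\rho_2})\,|\phi_{\rho_1}\rangle\langle\phi_{\rho_2}|\,d\rho_1\,d\rho_2$, so that everything comes down to identifying the coherent-state matrix element with the coefficient in (\ref{eq2.15}).

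The heart of the argument is this identification. Using $\tr(a^w b^w) = (2\pi)^{-n}\int a\,b\,dX$ together with $(\phi_{\rho_1}, a^w\phi_{\rho_2}) = \tr(a^w|\phi_{\rho_2}\rangle\langle\phi_{\rho_1}|)$, the matrix element equals the phase-space pairing of $a$ against the Weyl symbol (cross-Wigner function) of $|\phi_{\rho_2}\rangle\langle\phi_{\rho_1}|$. A direct Gaussian computation, using the covariance of the Weyl calculus under phase-space translations and the explicit vacuum symbol $f_0 = 2^n e^{-(x^2+\xi^2)}$ from (\ref{eq2.13}), shows that this cross-Wigner function has the form
\[
2^n e^{-|X - \bar\rho|^2}\,e^{2i\sigma(X, w)}\,e^{\frac{i}{2}\sigma(\rho_1,\rho_2)}, \qquad \bar\rho = \tfrac{\rho_1+\rho_2}{2},\ \ w = \tfrac{\rho_1 - \rho_2}{2},
\]
a midpoint-centered Gaussian of fixed height times a plane wave of frequency $w$ and the symplectic-area phase. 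Pairing $a$ against it, and recalling $\chi_0 = f_0\circ\kappa^{-1}$ and $\chi_{-(Y+T)/2}(Z) = \chi_0(Z + \tfrac{Y+T}{2})$, the Gaussian-times-plane-wave integral is exactly the localized symplectic Fourier transform $\mathcal{F}_\sigma\!\left(\chi_{-(Y+T)/2}\,b\right)\!\left(\tfrac{Y-T}{2}\right)$ of (\ref{eq2.15}), the area phase supplies $e^{i\sigma(Y,T)/2}$, and the normalizations of $f_0$ and of the resolution of the identity combine to give the constant $\frac{1}{2^n(2\pi)^{2n}}$ (concretely, the coefficient in (\ref{eq2.15}) equals $2^n$ times the matrix element, and $\frac{1}{(2\pi)^{2n}}\cdot\frac{1}{2^n}$ is exactly the stated prefactor). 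Transporting back through $\mathcal{T}$ and $\kappa$, which preserves $\sigma$, then yields (\ref{eq2.15}).

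It remains to treat convergence and the passage to $a\in\mathscr{S}'(T^*\mathbb{R}^n)$. For $u,v\in\mathscr{S}(\mathbb{R}^n)$ the pairing $(\Pi_{Y,T}\mathcal{T}u,\mathcal{T}v)_{H_\Phi}$ factors as $(\mathcal{T}u,V_T)_{H_\Phi}\,(V_Y,\mathcal{T}v)_{H_\Phi}$, and these Bargmann coefficients are Schwartz functions of $T$ and $Y$ on $\Lambda_\Phi$, hence rapidly decaying, whereas for tempered $b$ the factor $\mathcal{F}_\sigma(\chi_{-(Y+T)/2}b)(\tfrac{Y-T}{2})$ is smooth with at most polynomial growth in $(Y,T)$; this gives the absolute convergence asserted in the theorem. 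To make the operator manipulations rigorous I would first establish (\ref{eq2.15}) for $a\in\mathscr{S}(T^*\mathbb{R}^n)$, where $a^w$ is Hilbert--Schmidt and the double resolution of the identity and the Fubini interchanges are all legitimate, and then extend to general $a\in\mathscr{S}'$ by continuity: with $u,v$ fixed, the right-hand side depends on $a$ only through the tempered pairings $\mathcal{F}_\sigma(\chi_{-(Y+T)/2}b)(\tfrac{Y-T}{2}) = \langle b,\cdot\rangle$ against fixed Schwartz functions, which are continuous in $a\in\mathscr{S}'$, while the approximation furnished by Proposition \ref{density_prop} controls both sides.

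The main obstacle I anticipate is not a single step but the rigorous handling of the distributional case together with the exact bookkeeping of constants and phases. In particular, the phase $e^{i\sigma(Y,T)/2}$ in (\ref{eq2.15}) is produced entirely by the symplectic-area phase of the cross-Wigner function and by the (non-cancelling) phases in the correspondence $\mathcal{T}^{-1}V_Y = U_{\rho_Y}e_0$, so these signs must be tracked with care, as must the $2^n$ arising from the normalization of $f_0$. An alternative route, closer to \cite{HiLaSjZe}, bypasses the abstract resolution of the identity: one inserts the explicit magnetic translation formula (\ref{eq2.12.2}) for $e^{i\sigma((x,D_x),Y)}V_0$, writes $(\Pi_{Y,T}\mathcal{T}u,\mathcal{T}v)_{H_\Phi}$ out as an explicit Gaussian expression, and evaluates the $(Y,T)$-integral directly as a Gaussian Fourier integral reproducing $(a^w u,v)_{L^2}$, trading the conceptual brevity above for a longer but fully explicit computation.
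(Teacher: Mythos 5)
Your proposal is correct in substance but takes a genuinely different route from the paper on the core identity. The paper does not reprove the Schwartz-class case at all: it cites \cite[Section 4]{HiLaSjZe} for (\ref{eq2.15}) when $a\in\mathscr S(T^*\mathbb R^n)$, and the entire proof consists of (i) the rapid-decay estimate (\ref{eq2.17}) for $(\Pi_{Y,T}\mathcal Tu,\mathcal Tv)_{H_\Phi}$, obtained on the Bargmann side by computing $V_0=\mathcal Te_0=Ce^{ig}$ via quadratic stationary phase, using strict positivity of $\Lambda_g$ relative to $\Lambda_\Phi$ to get $\Phi+\operatorname{Im}g\asymp|x|^2$, and invoking the explicit magnetic translation formula (\ref{eq2.20}); and (ii) a dominated-convergence argument for $a_j\to a$ in $\mathscr S'$, where Banach--Steinhaus supplies the uniform polynomial bound $|\mathcal F_\sigma(\chi_{-\frac{Y+T}{2}}b_j)(\frac{Y-T}{2})|\leq\mathcal O(1)\langle Y\rangle^M\langle T\rangle^M$. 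You instead derive the Schwartz-class identity from scratch on the real side, by a double insertion of the coherent-state resolution of the identity and the computation of the cross-Wigner function of two coherent states; your constant bookkeeping checks out ($\frac{1}{(2\pi)^{2n}}\cdot\frac{\pi^n}{(2\pi)^n}=\frac{1}{2^n(2\pi)^{2n}}$), and your observation that $(\mathcal Tu,V_T)_{H_\Phi}$ is, after pulling back by $\mathcal T$, a short-time Fourier transform of a Schwartz function and hence rapidly decaying, is a cleaner substitute for the paper's positive-Lagrangian derivation of (\ref{eq2.17}). What your approach buys is self-containedness and a transparent origin for the phase $e^{i\sigma(Y,T)/2}$ and the factor $2^n$; what the paper's buys is brevity and estimates that live entirely in the weighted spaces $H_\Phi$ where they are reused later. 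The one place you should tighten is the passage to $a\in\mathscr S'$: weak-$*$ continuity of $a\mapsto\mathcal F_\sigma(\chi_{-\frac{Y+T}{2}}b)(\frac{Y-T}{2})$ only gives pointwise convergence in $(Y,T)$, so to integrate the limit you need a bound on this factor that is uniform over the approximating sequence (polynomial in $\langle Y\rangle,\langle T\rangle$, via Banach--Steinhaus applied to the bounded set $\{b_j\}$), not merely the polynomial bound for the single limiting $b$ that you state; with that supplied, your dominated-convergence step closes exactly as in the paper.
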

\begin{proof}
The rank one decomposition (\ref{eq2.15}) has been established in~\cite[Section 4]{HiLaSjZe} for $a$ of Schwartz class, and to show that (\ref{eq2.15}) holds also for $a\in \mathscr S'(T^* \mathbb R^n)$, we shall proceed by an argument of approximation. When doing so, it will be convenient to observe first that we have for every $N \in \mathbb N$ and for each fixed $u$, $v\in \mathscr S(\mathbb R^n)$,
\begeq
\label{eq2.17}
(\Pi_{Y,T}\mathcal Tu, \mathcal Tv)_{H_{\Phi}(\mathbb C^n)} = \mathcal O_N(1) \langle{Y\rangle}^{-N}\,\langle{T\rangle}^{-N}, \quad Y,T \in \Lambda_{\Phi}.
\endeq
Indeed, an application of the method of quadratic stationary phase~\cite[Lemma 13.2]{Zw12} gives that
\begeq
\label{eq2.18}
V_0(x) = (\mathcal Te_0)(x) = C e^{ig(x)}, \quad 0 \neq C \in \mathbb C,
\endeq
where $g$ is a holomorphic quadratic form on $\mathbb C^n$ given by the critical value
\[
g(x) = {\rm vc}_y \left(\varphi(x,y) + iy^2/2\right).
\]
It follows that the complex Lagrangian plane
\[
\Lambda_g = \{(x,g'(x)); x \in \mathbb C^n\} \subseteq T^*\mathbb C^{n}
\]
satisfies $\Lambda_g = \kappa(\{(y,iy); y\in \mathbb C^n\})$. Here the complex Lagrangian plane $\eta = iy$, $y\in \mathbb C^n$, is strictly positive with respect to $T^*\mathbb R^{n}$~\cite[Proposition 1.2.5]{HiSj15}, and therefore $\Lambda_g$ is strictly positive with respect to $\Lambda_{\Phi}$, see~\cite[Definition 1.2.4]{HiSj15}. An application of~\cite[Proposition 1.2.8]{HiSj15} allows us to conclude that
\begeq
\label{eq2.19}
\Phi(x) + {\rm Im}\, g(x) \asymp \abs{x}^2, \quad x\in \mathbb C^n.
\endeq

\medskip
\noindent
Next, a straightforward computation performed in~\cite[Section 2]{HiLaSjZe}, which uses (\ref{eq2.12.0.1}) and (\ref{eq2.12.2}) only, shows that we have for $Y = (y,(2/i)\partial_y \Phi(y)) \in \Lambda_{\Phi}$,
\begeq
\label{eq2.20}
\abs{\left(e^{i\sigma((x,D_x),Y)} V_0\right)(x)} = e^{\Phi(x) - \Phi(x+y)} \abs{V_0(x+y)}, \quad x\in \mathbb C^n.
\endeq
We get therefore, combining (\ref{eq2.18}), (\ref{eq2.19}), and (\ref{eq2.20}), for some $C>0$,
\begeq
\label{eq2.20.1}
\abs{\left(e^{i\sigma((x,D_x),Y)} V_0\right)(x)}e^{-\Phi(x)} \leq C\, e^{-\abs{x+y}^2/C}, \quad x\in \mathbb C^n,
\endeq
and it follows from (\ref{eq2.20.1}) that for all $N \in \mathbb N$,
\begin{multline}
\label{eq2.21}
\abs{(e^{i\sigma((x,D_x),Y)} V_0, \mathcal Tv)_{H_{\Phi}(\mathbb C^n)}} \leq C \int e^{-\abs{x+y}^2/C}\, \abs{\mathcal Tv(x)}e^{-\Phi(x)}\, L(dx) \\
\leq \mathcal O_N(1) \int \langle{x\rangle}^{-N} e^{-\abs{x+y}^2/C}\, L(dx) \leq  \mathcal O_N(1)\langle{y\rangle}^{-N}.
\end{multline}
Here we have also used that $\mathcal Tv(x) = \mathcal O_N(1) \langle{x\rangle}^{-N} e^{\Phi(x)}$, $\forall N$, see~\cite[Proposition 6.1]{Horm91}. The rapid decay estimate (\ref{eq2.17}) follows from (\ref{eq2.21}), in view of (\ref{eq2.14.1}).

\medskip
\noindent
It is now easy to complete the proof, by an argument of approximation. Let $a\in \mathscr S'(T^*\mathbb R^n)$ and let $a_j \in \mathscr S(T^* \mathbb R^n)$ be such that $a_j \rightarrow a$ in $\mathscr S'(T^*\mathbb R^n)$. It follows that for all $u,v \in \mathscr S(\mathbb R^n)$, we have, writing $b_j = a_j \circ \kappa^{-1} \in \mathscr S(\Lambda_{\Phi})$,
\begin{multline}
\label{eq2.22}
(a^wu,v)_{L^2(\mathbb R^n)} = \lim_{j\rightarrow \infty} (a_j^wu,v)_{L^2(\mathbb R^n)} = \lim_{j\rightarrow \infty} (b_j^w\, \mathcal Tu,\mathcal Tv)_{H_{\Phi}(\mathbb C^n)} \\
= \lim_{j\rightarrow \infty} \frac{1}{2^{n} (2\pi)^{2n}} \int\!\!\!\int_{(\Lambda_{\Phi})^2} e^{\frac{i\sigma(Y,T)}{2}}
\mathcal F_{\sigma}\left(\chi_{-\frac{Y+T}{2}} b_j\right)\left(\frac{Y-T}{2}\right)(\Pi_{Y,T}\mathcal Tu, \mathcal Tv)_{H_{\Phi}(\mathbb C^n)}\, dY\, dT.
\end{multline}
Here we have
\[
\mathcal F_{\sigma}\left(\chi_{-\frac{Y+T}{2}} b_j\right)\left(\frac{Y-T}{2}\right) \rightarrow \mathcal F_{\sigma}\left(\chi_{-\frac{Y+T}{2}} b\right)\left(\frac{Y-T}{2}\right), \quad j \rightarrow \infty
\]
pointwise, and the Banach-Steinhaus theorem gives a uniform bound for some fixed $M\geq 0$,
\[
\abs{\mathcal F_{\sigma}\left(\chi_{-\frac{Y+T}{2}} b_j\right)\left(\frac{Y-T}{2}\right)} \leq \mathcal O(1) \langle{Y\rangle}^M \langle{T\rangle}^M, \quad j=1,2,\ldots,\quad Y,T \in \Lambda_{\Phi}.
\]
Recalling (\ref{eq2.17}) we obtain therefore (\ref{eq2.15}) from (\ref{eq2.22}) by dominated convergence.
\end{proof}

\medskip
\noindent
{\it Remark}. As also mentioned in~\cite[Section 4]{HiLaSjZe}, the rank one decomposition (\ref{eq2.15}) can be viewed as an FBI-Bargmann transform side analogue of the corresponding decomposition in the real domain, established in~\cite{GrHe99}, and the observation here is that working in the complex domain, on the FBI–Bargmann transform side, seems to make the estimates and computations particularly natural and direct. See also~\cite[Section 5]{Sj95},~\cite[Section 5]{Sj08}.

\section{Boundedness on modulation spaces}
\label{sect_bounded}
\medskip
\noindent
The purpose of this section is to prove Theorem \ref{theo_bounded}. When doing so, we shall first recall the FBI transform characterization of the symbol class $\widetilde{S}(m) \subseteq \mathscr S'(E)$, $E = \mathbb R^d \simeq T^*\mathbb R^n$, $d = 2n$, obtained in~\cite[Proposition 5.1]{Sj08}. To this end, let $\phi(X,Y)$ be a holomorphic quadratic form on $\mathbb C^{d}_X \times \mathbb C^{d}_Y$ such that
\begeq
\label{eq3.1}
{\rm det}\, \phi''_{XY} \neq 0, \quad {\rm Im}\, \phi''_{YY}>0.
\endeq
Similarly to (\ref{eq2.7}), associated to $\phi$ is the complex linear canonical transformation
\begeq
\label{eq3.2}
\kappa_{\phi}: \mathbb C^{2d} \ni (Y,-\phi'_Y(X,Y)) \mapsto (X,\phi'_X(X,Y)) \in \mathbb C^{2d},
\endeq
and the unitary metaplectic FBI-Bargmann transform
\begeq
\label{eq3.3}
\mathcal T_{\phi}: L^2(\mathbb R^d) \rightarrow H_{\widetilde{\Phi}}(\mathbb C^d),
\endeq
defined as in (\ref{eq2.8}) -- (\ref{eq2.10}). It is established in ~\cite[Proposition 5.1]{Sj08} that $a\in \widetilde{S}(m)$ precisely when
\begeq
\label{eq3.4}
\mathbb C^d \ni X \mapsto \frac{e^{-\widetilde{\Phi}(X)} \mathcal T_{\phi}\, a(X)}{m\left(\kappa_{\phi}^{-1}\left(X,\frac{2}{i}\frac{\partial \widetilde{\Phi}}{\partial X}(X)\right)\right)} \in L^{\infty}(\mathbb C^d),
\endeq
and the $L^{\infty}$--norm of the function in (\ref{eq3.4}) is a norm on $\widetilde{S}(m)$.

\medskip
\noindent
In the following discussion, characterization (\ref{eq3.4}) will only be used in the case when
\begeq
\label{eq3.5}
\phi(X,Y) = i(X-Y)^2, \quad X,Y \in \mathbb C^d,
\endeq
and it follows from (\ref{eq2.10}) and (\ref{eq3.2}) that then
\begeq
\label{eq3.6}
\widetilde{\Phi}(X) = ({\rm Im}\, X)^2, \quad \kappa_{\phi}: \mathbb C^{2d} \ni (Y,H) \mapsto \left(Y - \frac{iH}{2},H\right) \in \mathbb C^{2d}.
\endeq
We conclude, in view of (\ref{eq3.4}), (\ref{eq3.6}) that $a\in \widetilde{S}(m)$ if and only if
\begeq
\label{eq3.7}
\abs{\int_{{\mathbb R}^d} e^{-(X-Y)^2} a(Y)\, dY} \leq C e^{({\rm Im}\, X)^2} m({\rm Re}\, X, -2{\rm Im}\, X),\quad X\in \mathbb C^d,
\endeq
for some $C>0$, or equivalently,
\begeq
\label{eq3.8}
\abs{\mathcal F \left(f_T a\right)(\Xi)} \leq C\, m(T,\Xi),\quad (T,\Xi)\in E \times E^*.
\endeq
Here $f_0$ is the real Gaussian on $\mathbb R^d$ given in (\ref{eq2.13}) and $f_T(Y) = f_0(Y-T)$, $Y,T\in \mathbb R^d$.

\medskip
\noindent
Assuming that $a\in \widetilde{S}(m)$ and passing to the FBI-Bargmann transform side by means of (\ref{eq2.7}), (\ref{eq2.9}), we shall next express (\ref{eq3.8}) in terms of the  symplectic short time Fourier transform of $b = a\circ \kappa^{-1} \in \mathscr S'(\Lambda_{\Phi})$, occurring in (\ref{eq2.15}). Consider, for $T,X \in \Lambda_{\Phi}$, where $\chi_T \in \mathscr S(\Lambda_{\Phi})$ is given in (\ref{eq2.14}),
\begeq
\label{eq3.9}
\mathcal F_{\sigma}(\chi_T b)(X) = \frac{1}{\pi^n} \int_{\Lambda_{\Phi}} e^{2i\sigma(X,Y)} \chi_T(Y) b(Y)\, dY, \quad X \in \Lambda_{\Phi}.
\endeq
Let $\sigma_{\mathbb R}$ be the standard symplectic form on $E = T^*\mathbb R^n$. Making the change of variables $Y = \kappa(Y')$, $Y'\in T^*\mathbb R^n$, with $\kappa$ given in (\ref{eq2.7}), and using the fact that the map
\[
\kappa: (T^*\mathbb R^{n},\sigma_{\mathbb R}) \rightarrow (\Lambda_{\Phi},\sigma|_{\Lambda_{\Phi}})
\]
is real canonical, we get, letting $T' = \kappa^{-1}(T)$, $X' = \kappa^{-1}(X)$,
\begin{multline}
\label{eq3.10}
\mathcal F_{\sigma}(\chi_T b)(X) = \frac{1}{\pi^n} \int_{\mathbb R^d} e^{2i\sigma(X,\kappa(Y'))} f_{T'}(Y') a(Y')\, dY' \\
= \frac{1}{\pi^n} \int_{\mathbb R^d} e^{2i\sigma_{\mathbb R}(X',Y')} f_{T'}(Y') a(Y')\, dY' = \frac{1}{\pi^n} \mathcal F(f_{T'}a)(2J^{-1}X').
\end{multline}
Here we have also used that
\begeq
\label{eq3.11}
\sigma_{\mathbb R}(X',Y') = JX'\cdot Y' = -J^{-1}X'\cdot Y', \quad X',Y' \in T^* \mathbb R^n,
\endeq
where $J$ is given in (\ref{eq1.3}) and where we identify $E$ and $E^*$. We get therefore, using (\ref{eq3.8}) and (\ref{eq3.10}),
\begeq
\label{eq3.12}
\abs{\mathcal F_{\sigma}(\chi_T b)(X)}\leq C \norm{a}_{\widetilde{S}(m)}\, m\left(\kappa^{-1}(T), 2J^{-1}\kappa^{-1}(X)\right), \quad X,T \in \Lambda_{\Phi},
\endeq
\begeq
\label{eq3.13}
\abs{\mathcal F_{\sigma}\left(\chi_{-\frac{Y+T}{2}} b\right)\left(\frac{Y-T}{2}\right)} \leq C \norm{a}_{\widetilde{S}(m)} m\left(-\kappa^{-1}\left(\frac{Y+T}{2}\right), J^{-1} \kappa^{-1}(Y-T)\right).
\endeq
Recalling the linear map $q$ in (\ref{eq1.3.1}), we may rewrite (\ref{eq3.13}) as follows,
\begeq
\label{eq3.14}
\abs{\mathcal F_{\sigma}\left(\chi_{-\frac{Y+T}{2}} b\right)\left(\frac{Y-T}{2}\right)} \leq C \norm{a}_{\widetilde{S}(m)} m\left(q\left(-\kappa^{-1}(Y), -\kappa^{-1}(T)\right)\right), \quad Y,T \in \Lambda_{\Phi}.
\endeq

\medskip
\noindent
Combining Theorem \ref{rank_one} with (\ref{eq3.14}), we shall now complete the proof of Theorem \ref{theo_bounded} by some straightforward estimates. Assuming that the order function $m$ is such that the integral operator
\begeq
\label{eq3.15}
(\mathcal Mg)(Y') = \int_{E} m(q(Y',T')) g(T')\, dT'
\endeq
satisfies for some $1\leq p \leq \infty$,
\begeq
\label{eq3.15.1}
\mathcal M \in \mathcal L(L^p(E), L^p(E)),
\endeq
let us write, using (\ref{eq2.14.1}), (\ref{eq2.15}), and (\ref{eq3.14}),
\begin{multline}
\label{eq3.16}
\abs{(a^w u,v)_{L^2(\mathbb R^n)}} \\
\leq C \norm{a}_{\widetilde{S}(m)} \int\!\!\!\int_{(\Lambda_{\Phi})^2} m\left(q\left(-\kappa^{-1}(Y), -\kappa^{-1}(T)\right)\right)\, \abs{(\Pi_{Y,T} \mathcal Tu, \mathcal Tv)_{H_{\Phi}(\mathbb C^n)}}\, dY\, dT \\
 = C \norm{a}_{\widetilde{S}(m)} \int\!\!\!\int_{(\Lambda_{\Phi})^2} m(q(\kappa^{-1}(Y), \kappa^{-1}(T)))\, F(T) G(Y)\, dY\, dT.
\end{multline}
Here $u,v \in \mathscr S(\mathbb R^n)$ and we have set
\begeq
\label{eq3.17}
F(T) = \abs{(\mathcal  Tu, e^{-i\sigma((x,D_x),T)}V_0)_{H_{\Phi}(\mathbb C^n)}}, \quad G(Y) = \abs{(\mathcal  Tv, e^{-i\sigma((x,D_x),Y)}V_0)_{H_{\Phi}(\mathbb C^n)}}.
\endeq
We get using (\ref{eq2.20.1}), writing $T = (t,(2/i)\partial_t \Phi(t)) \in \Lambda_{\Phi}$,
\begeq
\label{eq3.18}
F(T) \leq C\, \int_{\mathbb C^n} \abs{\mathcal Tu(x)}e^{-\Phi(x)}\, e^{-\abs{x-t}^2/C}\, L(dx) = C \left(\abs{\mathcal T u}e^{-\Phi}*e^{-\abs{\cdot}^2/C}\right)(t),
\endeq
and therefore by Young's inequality we get, using the notation (\ref{app8}), (\ref{app10.1}),
\begeq
\label{eq3.19}
\norm{F}_{L^p(\Lambda_{\Phi})} \leq C \norm{e^{-\abs{\cdot}^2/C}}_{L^1(\mathbb C^n)} \norm{\mathcal Tu}_{H^p_{\Phi}(\mathbb C^n)} \leq
C \norm{\mathcal Tu}_{H^p_{\Phi}(\mathbb C^n)}.
\endeq
It follows from (\ref{eq3.15.1}) and (\ref{eq3.19}) that the function
\begeq
\label{eq3.20}
0\leq H(Y) = \int_{\Lambda_{\Phi}} m(q(\kappa^{-1}(Y), \kappa^{-1}(T))) F(T)\, dT \in L^p(\Lambda_{\Phi})
\endeq
and we have
\begin{multline}
\label{eq3.21}
\norm{H}_{L^p(\Lambda_{\Phi})} \leq \norm{\mathcal M}_{\mathcal L(L^p(E), L^p(E))}\, \norm{F}_{L^p(\Lambda_{\Phi})} \\ \leq C \norm{\mathcal M}_{\mathcal L(L^p(E), L^p(E))}\, \norm{\mathcal T u}_{H^p_{\Phi}(\mathbb C^n)}.
\end{multline}

\medskip
\noindent
We get, using (\ref{eq3.16}), (\ref{eq3.21}), and H\"older's inequality,
\begin{multline}
\label{eq3.22}
\abs{(a^w u,v)_{L^2(\mathbb R^n)}} \leq C\norm{a}_{\widetilde{S}(m)}\, \int_{\Lambda_{\Phi}} H(Y)\, G(Y)\, dY \leq C \norm{a}_{\widetilde{S}(m)} \norm{H}_{L^p(\Lambda_{\Phi})}
\norm{G }_{L^q(\Lambda_{\Phi})} \\
\leq C \norm{a}_{\widetilde{S}(m)} \norm{\mathcal M}_{\mathcal L(L^p(E), L^p(E))}\,\norm{\mathcal T u}_{H^p_{\Phi}(\mathbb C^n)}\, \norm{\mathcal T v}_{H^q_{\Phi}(\mathbb C^n)}.
\end{multline}
Here
\[
\frac{1}{p} + \frac{1}{q} = 1, \quad 1\leq p,q \leq \infty,
\]
and we have also used that
\[
\norm{G}_{L^q(\Lambda_{\Phi})} \leq C \norm{\mathcal Tv}_{H^q_{\Phi}(\mathbb C^n)},
\]
similarly to (\ref{eq3.18}), (\ref{eq3.19}). Letting $U = \mathcal T(a^w u)\in \mathcal T(\mathscr S'(\mathbb R^n))$, we get from (\ref{eq3.22}), for all $V \in \mathcal T(\mathscr S(\mathbb R^n))$, i.e. for all $V\in {\rm Hol}(\mathbb C^n)$ such that $V(x) = \mathcal O_N(1) \langle{x\rangle}^{-N} e^{\Phi(x)}$, for all $N \geq 0$,
\begeq
\label{eq3.23}
\abs{(U,V)_{H_{\Phi}(\mathbb C^n)}} \leq C \norm{a}_{\widetilde{S}(m)} \norm{\mathcal M}_{\mathcal L(L^p(E), L^p(E))}\, \norm{\mathcal T u}_{H^p_{\Phi}(\mathbb C^n)}\, \norm{V}_{H^q_{\Phi}(\mathbb C^n)}.
\endeq

\medskip
\noindent
Let next
\[
\Pi_{\Phi}: L^2(\mathbb C^n, e^{-2\Phi} L(dx)) \rightarrow H_{\Phi}(\mathbb C^n)
\]
be the orthogonal projection, and let us recall from~\cite[Section 1]{Sj96},~\cite[Section 1.3]{HiSj15} that $\Pi_{\Phi}$ is given by
\begeq
\label{eq3.24}
\Pi_{\Phi} g(x) = a_{\Phi} \int e^{2\Psi(x,\overline{y})}\, g(y) e^{-2\Phi(y)}\, L(dy), \quad a_{\Phi} > 0.
\endeq
Here $\Psi$ is the unique holomorphic quadratic form on $\mathbb C^{2n}_{x,y}$ such that $\Psi(x,\overline{x}) = \Phi(x)$, $x\in \mathbb C^n$. We have
\begeq
\label{eq3.25}
\Phi(x) + \Phi(y) - 2{\rm Re}\, \Psi(x,\overline{y}) \asymp \abs{x-y}^2, \quad x,y\in \mathbb C^n,
\endeq
see~\cite[Section 1]{Sj96},~\cite[Section 1.3]{HiSj15}, and it follows from (\ref{eq3.25}) that we can take $V = \Pi_{\Phi} \chi$, $\chi \in C_0(\mathbb C^n)$, in (\ref{eq3.23}). We get therefore for all $\chi \in C_0(\mathbb C^n)$,
\begin{multline}
\label{eq3.26}
\abs{(U, \chi)_{L^2_{\Phi}(\mathbb C^n)}} = \abs{(\Pi_{\Phi} U, \chi)_{L^2_{\Phi}(\mathbb C^n)}}
= \abs{(U,\Pi_{\Phi}\chi)_{H_{\Phi}(\mathbb C^n)}} \\
\leq C \norm{a}_{\widetilde{S}(m)} \norm{\mathcal M}_{\mathcal L(L^p(E), L^p(E))}\, \norm{\mathcal T u}_{H^p_{\Phi}(\mathbb C^n)}\, \norm{\chi e^{-\Phi}}_{L^q(\mathbb C^n)}.
\end{multline}
Here we have also used that the reproducing property
\[
\Pi_{\Phi} U = U
\]
holds for all $U \in \mathcal T(\mathscr S'(\mathbb R^n))$, see~\cite[Section 12.2]{Sj02}, and that
\[
\Pi_{\Phi}: L^q(\mathbb C^n, e^{-q\Phi}L(dx)) \rightarrow L^q(\mathbb C^n, e^{-q\Phi}L(dx)), \quad 1\leq q < \infty,
\]
\[
\Pi_{\Phi}: e^{\Phi} L^{\infty}(\mathbb C^n) \rightarrow e^{\Phi} L^{\infty}(\mathbb C^n),
\]
is bounded, in view of (\ref{eq3.24}), (\ref{eq3.25}). It is therefore clear from (\ref{eq3.26}) that $U e^{-\Phi} \in L^p(\mathbb C^n)$, in other words, that $U \in H^p_{\Phi}(\mathbb C^n)$, with
\[
\norm{U}_{H^p_{\Phi}(\mathbb C^n)} = \norm{\mathcal T(a^w u)}_{H^p_{\Phi}(\mathbb C^n)} \leq C \norm{a}_{\widetilde{S}(m)} \norm{\mathcal M}_{\mathcal L(L^p(E), L^p(E))}\,\norm{\mathcal T u}_{H^p_{\Phi}(\mathbb C^n)},
\]
for all $u\in \mathscr S(\mathbb R^n)$. We get, recalling (\ref{app30}),
\[
\norm{a^w u}_{M^p(\mathbb R^n)} \leq C \norm{a}_{\widetilde{S}(m)} \norm{\mathcal M}_{\mathcal L(L^p(E), L^p(E))}\,\norm{u}_{M^p(\mathbb R^n)},
\]
which completes the proof of Theorem \ref{theo_bounded}.

\section{Effective kernels and composition}
\label{sect_compos}
\medskip
\noindent
We shall start with some general observations and remarks, closely related to the discussion in~\cite[Section 5]{Sj08}. Let $a\in \mathscr S'(T^*\mathbb R^n)$ and let $K_{a^w} \in \mathscr S'(\mathbb R^{n} \times \mathbb R^n)$ be the Schwartz kernel of $a^w$, so that we have for all $u,v\in \mathscr S(\mathbb R^n)$,
\begeq
\label{eq4.1}
(a^w u,v)_{L^2(\mathbb R^n)} = \langle{a^w u, \overline{v}\rangle}_{\mathscr S'(\mathbb R^n), \mathscr S(\mathbb R^n)} = \langle{K_{a^w}, \overline{v}\otimes u\rangle}_{\mathscr S'(\mathbb R^{2n}), \mathscr S(\mathbb R^{2n})} = (K_{a^w}, v\otimes \overline{u})_{L^2(\mathbb R^{2n})}.
\endeq
Let $\mathcal T: L^2(\mathbb R^n) \rightarrow H_{\Phi}(\mathbb C^n)$ be a metaplectic FBI-Bargmann transform as in (\ref{eq2.8}), (\ref{eq2.9}). Then, as observed in~\cite[Section 5]{Sj08}, the map
\begeq
\label{eq4.2}
\widetilde{\mathcal T}u(x) = C \int e^{-i\varphi^*(x,y)} u(y)\, dy = \overline{\left(\mathcal T \overline{u}\right)(\overline{x})}, \quad \varphi^*(x,y) = \overline{\varphi(\overline{x},\overline{y})}
\endeq
is an FBI-Bargmann transform
\begeq
\label{eq4.3}
\widetilde{\mathcal T}: L^2(\mathbb R^n) \rightarrow H_{\Phi^*}(\mathbb C^n), \quad \Phi^*(x) = \Phi(\overline{x}).
\endeq
It follows that the map $\mathcal T \otimes \widetilde{\mathcal T}$ is a unitary FBI-Bargmann transform,
\begeq
\label{eq4.4}
\mathcal T \otimes \widetilde{\mathcal T}: L^2(\mathbb R^{2n}) \rightarrow H_{F}(\mathbb C^{2n}), \quad F(x,z) = \Phi(x) + \Phi^*(z),\quad (x,z)\in \mathbb C^{n}\times \mathbb C^n,
\endeq
and we get therefore in view of (\ref{eq4.1}), letting $K = (\mathcal T \otimes \widetilde{\mathcal T})(K_{a^w})$,
\begin{multline}
\label{eq4.5}
(a^w u,v)_{L^2(\mathbb R^n)} = \left(K, (\mathcal T \otimes \widetilde{\mathcal T})(v\otimes \overline{u})\right)_{H_{F}(\mathbb C^{2n})} \\
= \left(K, \mathcal T v \otimes \widetilde{\mathcal T}(\overline{u})\right)_{H_{F}(\mathbb C^{2n})} = \int\!\!\!\int K(x,z) (\mathcal  T u)(\overline{z})\, \overline{(\mathcal Tv)(x)}\, e^{-2F(x,z)}\, L(dx)\, L(dz) \\
= \int\!\!\!\int K(x,\overline{z}) (\mathcal  T u)(z)\, \overline{(\mathcal Tv)(x)}\, e^{-2\Phi(x)}\, e^{-2\Phi(z)}\, L(dx)\, L(dz).
\end{multline}
Here the kernel $K = K(x,z)$ is holomorphic on $\mathbb C^{2n}_{x,z}$ such that $e^{-\Phi(x)} K(x,\overline{z}) e^{-\Phi(z)}$ is of temperate growth at infinity, and using (\ref{eq4.5}) we get therefore for all $x\in \mathbb C^n$,
\begeq
\label{eq4.6}
\mathcal T(a^w u)(x) = \int K(x,\overline{z}) (\mathcal  T u)(z)\, e^{-2\Phi(z)}\, L(dz),\quad u\in \mathscr S(\mathbb R^n).
\endeq
On the other hand, we get from (\ref{eq2.14.1}), (\ref{eq2.15}),
\begeq
\label{eq4.7}
(a^w u,v)_{L^2(\mathbb R^n)} = \int\!\!\!\int \widetilde{K}(x,\overline{z}) (\mathcal  T u)(z)\, \overline{(\mathcal Tv)(x)}\, e^{-2\Phi(x)}\, e^{-2\Phi(z)}\, L(dx)\, L(dz),
\endeq
where, letting $V_Y = e^{i\sigma((x,D_x),Y)}V_0$, $V_T = e^{i\sigma((x,D_x),T)}V_0$,
\begeq
\label{eq4.8}
\widetilde{K}(x,z)
= \frac{1}{2^{n} (2\pi)^{2n}}\int\!\!\!\int_{(\Lambda_{\Phi})^2} e^{\frac{i\sigma(Y,T)}{2}}
\mathcal F_{\sigma}\left(\chi_{-\frac{Y+T}{2}} b\right)\left(\frac{Y-T}{2}\right) V_Y(x)\, \overline{V_T(\overline{z})}\, dY\, dT.
\endeq
Using (\ref{eq2.20.1}), we observe that the integral in (\ref{eq4.8}) converges absolutely and locally uniformly in $(x,z)$, defining an entire holomorphic function $\widetilde{K}(x,z)$ on $\mathbb C^{2n}_{x,z}$, such that $e^{-\Phi(x)} \widetilde{K}(x,\overline{z}) e^{-\Phi(z)}$ is of temperate growth at infinity. It follows therefore, using also (\ref{eq4.7}), that we have for all $x\in \mathbb C^n$,
\begeq
\label{eq4.8.1}
\mathcal T(a^w u)(x) = \int \widetilde{K}(x,\overline{z}) (\mathcal  T u)(z)\, e^{-2\Phi(z)}\, L(dz),\quad u\in \mathscr S(\mathbb R^n),
\endeq
and we get, in view of (\ref{eq4.6}), (\ref{eq4.8.1}), that $K = \widetilde{K}$, i.e.
\begin{multline}
\label{eq4.8.2}
K(x,z) = (\mathcal T \otimes \widetilde{\mathcal T})(K_{a^w})(x,z) \\
= \frac{1}{2^{n} (2\pi)^{2n}}\int\!\!\!\int_{(\Lambda_{\Phi})^2} e^{\frac{i\sigma(Y,T)}{2}}
\mathcal F_{\sigma}\left(\chi_{-\frac{Y+T}{2}} b\right)\left(\frac{Y-T}{2}\right) V_Y(x)\, \overline{V_T(\overline{z})}\, dY\, dT.
\end{multline}
Following~\cite[Section 5]{Sj08}, we shall refer to the function
\begeq
\label{eq4.9}
K^{\rm eff}_{a^w}(x,z) = e^{-\Phi(x)} K(x,\overline{z}) e^{-\Phi(z)}
\endeq
as the effective kernel of $a^w$.

\medskip
\noindent
Let us write next
\begeq
\label{eq4.9.1}
K_{a^w}(x,y) = \left(\mathcal F^{-1}_2 a\right)\left(\frac{x+y}{2},x-y\right) = \left(\gamma^* \mathcal F^{-1}_2 a\right)(x,y).
\endeq
Here $\mathcal F^{-1}_2 a$ is the inverse Fourier transform of $a$ with respect to the second variable, and
\[
\gamma(x,y) = \left(\frac{x+y}{2},x-y\right), \quad \gamma^{-1}(x,y) = \left(x + \frac{y}{2}, x - \frac{y}{2}\right).
\]
The map
\begeq
\label{eq4.9.2}
U = \gamma^* \circ \mathcal F^{-1}_2: \mathscr S'(T^* \mathbb R^n) \ni a\mapsto K_{a^w} \in \mathscr S'(\mathbb R^n \times \mathbb R^n)
\endeq
can be viewed therefore as a metaplectic Fourier integral operator with real phase, and the associated linear canonical transformation is given by $\kappa_U = \kappa_{\gamma^*} \circ \kappa_{\mathcal F^{-1}_2}$. Let us compute $\kappa_U$, by a small modification of an argument in~\cite[Section 5]{Sj08}. To this end, we have, writing $a = a(t,\tau)$, $E = T^* \mathbb R^n$,
\begeq
\label{eq4.9.3}
\kappa_{\mathcal F^{-1}_2}: E \times E^* \ni (t,\tau;t^*,\tau^*) \mapsto (t,-\tau^*;t^*,\tau) \in T^* \mathbb R^{2n},
\endeq
\[
\kappa_{\gamma^*}: T^* \mathbb R^{2n} \ni (X,X^*) \mapsto (\gamma^{-1}X, \gamma^t X^*) \in T^*\mathbb R^{2n}, \quad X = (x,y) \in \mathbb R^{2n},
\]
or in other words,
\begeq
\label{eq4.9.4}
\kappa_{\gamma^*}: T^* \mathbb R^{2n} \ni (x,y;x^*,y^*) \mapsto \left(x + \frac{y}{2}, x - \frac{y}{2}; \frac{x^*}{2} + y^*, \frac{x^*}{2} - y^*\right)\in T^* \mathbb R^{2n}.
\endeq
We get, using (\ref{eq4.9.3}), (\ref{eq4.9.4}),
\begeq
\label{eq4.9.5}
\kappa_U: (t,\tau;t^*,\tau^*) \mapsto \left(t - \frac{\tau^*}{2}, t + \frac{\tau^*}{2}; \frac{t^*}{2} + \tau, \frac{t^*}{2} - \tau\right) = (x,y;x^*,y^*),
\endeq
and writing $(x,x^*;y,y^*)$ rather than $(x,y;x^*,y^*)$, we obtain the following form for $\kappa_U$,
\begeq
\label{eq4.9.6}
\kappa_U: E \times E^* \ni (t,\tau;t^*,\tau^*) \mapsto \left(t - \frac{\tau^*}{2}, \tau + \frac{t^*}{2}; t + \frac{\tau^*}{2}, -\tau + \frac{t^*}{2}\right) \in T^* \mathbb R^n \times T^* \mathbb R^n.
\endeq

\medskip
\noindent
We next observe that the composition
\begeq
\label{eq4.9.7}
\left(\mathcal T \otimes \widetilde{\mathcal T}\right)\circ U: L^2(E) \rightarrow H_F(\mathbb C^{2n})
\endeq
mapping $a\in \mathscr S'(E)$ to the kernel $K$ in (\ref{eq4.6}), is a metaplectic FBI-Bargmann transform. Indeed, a change of variables shows that $\left(\mathcal T \otimes \widetilde{\mathcal T}\right)\circ \gamma^*$ is such a transform, and in Appendix \ref{modFBI} we verified the general well known fact that the composition of an FBI-Bargmann transform and the (partial) Fourier transformation is also an FBI-Bargmann transform.

\medskip
\noindent
A direct computation using (\ref{eq4.2}) shows that the complex linear canonical transformation associated to $\mathcal T \otimes \widetilde{\mathcal T}$ is given by $\kappa \times (\Gamma \kappa \Gamma)$, where $\kappa$ is defined in (\ref{eq2.7}) and the antilinear involution $\Gamma$ is given in (\ref{app15.1}), see also~\cite[Section 5]{Sj08}. It follows that the complex linear canonical transformation associated to the Bargmann transform in (\ref{eq4.9.7}) is given by $(\kappa \times (\Gamma \kappa \Gamma))\circ \kappa_U$, and we get in view of (\ref{eq4.9.6}), recalling the standard symplectic matrix $J$ in (\ref{eq1.3}),
\begin{multline}
\label{eq4.9.8}
(\kappa \times (\Gamma \kappa \Gamma))\circ \kappa_U: (E \times E^*)^{\mathbb C} \ni (t,\tau; t^*,\tau^*) \\
\mapsto \left(\kappa\left((t,\tau) - \frac{1}{2}J(t^*,\tau^*)\right), \Gamma \kappa \left(\overline{(t,\tau)} + \frac{1}{2} \overline{J(t^*,\tau^*)}\right)\right) \in T^* \mathbb C^{n} \times T^* \mathbb C^{n}.
\end{multline}
In particular,
\[
\left((\kappa \times (\Gamma \kappa \Gamma))\circ \kappa_U\right)(E \times E^*) = \Lambda_{\Phi} \times \Gamma(\Lambda_{\Phi}) = \Lambda_{\Phi} \times \Lambda_{\Phi^*}.
\]

\medskip
\noindent
We summarize the discussion above in the following proposition, which has been established in~\cite[Section 5]{Sj08}, apart from the expression (\ref{eq4.8.2}) for the effective kernel of $a^w$. See also~\cite[Appendix A]{CoHiSj19} for a closely related discussion.
\begin{prop}
\label{eff_kernel}
Let $a\in \mathscr S'(T^* \mathbb R^n)$ and let $\mathcal T: L^2(\mathbb R^n) \rightarrow H_{\Phi}(\mathbb C^n)$ be an FBI-Bargmann transform as in {\rm (\ref{eq2.7})}, {\rm (\ref{eq2.8})}. There exists a unique kernel $K \in {\rm Hol}(\mathbb C^n_x \times \mathbb C^n_z)$, such that $e^{-\Phi(x)} K(x,\overline{z})e^{-\Phi(z)}$ is of temperate growth at infinity, which satisfies for all $x\in \mathbb C^n$,
\[
\mathcal T(a^w u)(x) = \int K(x,\overline{z}) (\mathcal  T u)(z)\, e^{-2\Phi(z)}\, L(dz),\quad u\in \mathscr S(\mathbb R^n).
\]
The kernel $K$ is given by {\rm (\ref{eq4.8.2})}, and the map $\mathscr S'(T^* \mathbb R^n) \ni a \mapsto K\in {\rm Hol}(\mathbb C^{2n})$ is a metaplectic FBI-Bargmann transform of the form {\rm (\ref{eq4.9.7})}, with the associated complex linear canonical transformation given by {\rm (\ref{eq4.9.8})}.
\end{prop}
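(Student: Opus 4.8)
The plan is to observe that the proposition is essentially a summary of the preceding discussion, so the proof amounts to assembling the pieces already in hand and supplying a clean uniqueness argument. First I would establish existence by setting $K = (\mathcal{T}\otimes\widetilde{\mathcal{T}})(K_{a^w})$, where $K_{a^w}\in\mathscr{S}'(\mathbb{R}^{2n})$ is the Schwartz kernel of $a^w$. Since $\mathcal{T}\otimes\widetilde{\mathcal{T}}$ is a unitary metaplectic FBI-Bargmann transform onto $H_F(\mathbb{C}^{2n})$ by (\ref{eq4.4}), and such transforms map $\mathscr{S}'$ onto the space of entire functions $K(x,z)$ for which $e^{-\Phi(x)}K(x,\overline{z})e^{-\Phi(z)}$ is of temperate growth (cf. the growth estimate from~\cite[Proposition 6.1]{Horm91} used repeatedly above), the kernel $K$ is holomorphic with the stated growth. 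Unwinding the pairing (\ref{eq4.1}) through (\ref{eq4.5}) and comparing the resulting integrals then yields the integral representation (\ref{eq4.6}), which is the asserted existence statement.

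For the explicit formula (\ref{eq4.8.2}), I would compute $K$ independently from the rank one decomposition of Theorem \ref{rank_one}. Substituting (\ref{eq2.15}) into (\ref{eq4.7}) produces the candidate kernel $\widetilde{K}$ in (\ref{eq4.8}); the rapid decay (\ref{eq2.20.1}) of the translated coherent states shows that the double integral converges absolutely and locally uniformly, so that $\widetilde{K}$ is entire with $e^{-\Phi(x)}\widetilde{K}(x,\overline{z})e^{-\Phi(z)}$ temperate. Since $\widetilde{K}$ then satisfies the same integral representation (\ref{eq4.8.1}) as $K$, the identity $K=\widetilde{K}$ will follow once uniqueness is available, giving (\ref{eq4.8.2}).

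The crux is therefore the uniqueness, and this is the step I expect to require the most care. The obstruction is that the admissibility condition controls only the temperate, rather than $L^2$, growth of $e^{-\Phi(x)}K(x,\overline{z})e^{-\Phi(z)}$, so that for fixed $x$ the integral in (\ref{eq4.6}) cannot be read off directly as an $H_{\Phi}(\mathbb{C}^n)$ inner product. To circumvent this I would argue as follows: if $K_1,K_2$ are two admissible kernels, their difference $L$ is entire with $e^{-\Phi(x)}L(x,\overline{z})e^{-\Phi(z)}$ temperate and satisfies $\int L(x,\overline{z})(\mathcal{T}u)(z)\,e^{-2\Phi(z)}\,L(dz)=0$ for all $u\in\mathscr{S}(\mathbb{R}^n)$. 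For fixed $x$, the function $h_x(z)=\overline{L(x,\overline{z})}$ is holomorphic with $h_x e^{-\Phi}$ temperate, and the vanishing condition says precisely that the sesquilinear pairing $(\mathcal{T}u,h_x)$, extending the inner product of $H_{\Phi}(\mathbb{C}^n)$ to temperate $h_x$, vanishes on all of $\mathcal{T}(\mathscr{S}(\mathbb{R}^n))$. Transporting back through the unitary $\mathcal{T}$, this is the $\mathscr{S}'(\mathbb{R}^n)$--$\mathscr{S}(\mathbb{R}^n)$ duality pairing of $\mathcal{T}^{-1}h_x$ against all Schwartz functions, which is nondegenerate; hence $h_x\equiv 0$ for every $x$, i.e. $L\equiv 0$. (Equivalently, uniqueness is immediate from the fact that $a\mapsto K$ factors as the Schwartz kernel map $a^w\mapsto K_{a^w}$ followed by the injective transform $\mathcal{T}\otimes\widetilde{\mathcal{T}}$.)

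Finally, for the claim that $a\mapsto K$ is a metaplectic FBI-Bargmann transform with canonical transformation (\ref{eq4.9.8}), I would invoke the factorization (\ref{eq4.9.7}): writing $U=\gamma^{*}\circ\mathcal{F}^{-1}_2$ as in (\ref{eq4.9.2}), the map $a\mapsto K$ equals $(\mathcal{T}\otimes\widetilde{\mathcal{T}})\circ U$. A change of variables shows that $(\mathcal{T}\otimes\widetilde{\mathcal{T}})\circ\gamma^{*}$ is an FBI-Bargmann transform, and composing with the partial Fourier transformation preserves this class by the general fact verified in Appendix \ref{modFBI}. The associated complex linear canonical transformation is then $(\kappa\times(\Gamma\kappa\Gamma))\circ\kappa_U$, which I would evaluate by combining the formula (\ref{eq4.9.6}) for $\kappa_U$ with the expression $\kappa\times(\Gamma\kappa\Gamma)$ for the transform attached to $\mathcal{T}\otimes\widetilde{\mathcal{T}}$, arriving at (\ref{eq4.9.8}). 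This last step is a direct linear-algebra computation with no essential difficulty.
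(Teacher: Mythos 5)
Your proposal is correct and follows essentially the same route as the paper: existence via $K=(\mathcal T\otimes\widetilde{\mathcal T})(K_{a^w})$ and the unwinding (\ref{eq4.1})--(\ref{eq4.6}), the explicit formula via the rank one decomposition of Theorem \ref{rank_one} giving $\widetilde K$ in (\ref{eq4.8}) with absolute and locally uniform convergence from (\ref{eq2.20.1}), identification $K=\widetilde K$ by uniqueness, and the metaplectic FBI-Bargmann structure via the factorization (\ref{eq4.9.7}) and the computation of $\kappa_U$. Your uniqueness argument (reducing, for fixed $x$, to the nondegeneracy of the $\mathscr S'$--$\mathscr S$ pairing through the characterization (\ref{app7}) of $\mathcal T(\mathscr S'(\mathbb R^n))$) is in fact spelled out in more detail than in the paper, where this step is left implicit; only the parenthetical remark that injectivity of $\mathcal T\otimes\widetilde{\mathcal T}$ is ``equivalent'' is slightly off, since that gives injectivity of $a\mapsto K$ rather than uniqueness of an admissible kernel satisfying the reproducing identity, but your main argument handles this correctly.
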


\medskip
\noindent
{\it Example}. Let $m>0$ be an order function on $E = T^* \mathbb R^n \simeq \mathbb R^{2n}$, and following~\cite[Chapter 7]{DiSj}, let us consider the symbol class
\[
S(m) = \left\{a\in C^{\infty}(E);\, m^{-1} \partial^{\alpha} a \in L^{\infty}(E),\,\,\forall \alpha \in \mathbb N^{2n}\right\}.
\]
We shall estimate the effective kernel of $a^w$, for $a\in S(m)$. To this end, carrying out repeated integration by parts and using the fact that $m$ is an order function, we get for all $N = 1,2,\ldots ,$
\begin{equation}
\label{eq4.9.9}
\abs{\mathcal F\left(f_{T'} a\right)(X')} \leq C_N\, m(T') \langle{X'\rangle}^{-N}, \quad T' \in E, \,\, X' \in E^*.
\end{equation}
Here $f_{T'}(Y') = f_0(Y'-T')$ and $f_0$ is given in (\ref{eq2.13}). In view of (\ref{eq3.10}) and (\ref{eq4.9.9}), we obtain that
\begeq
\label{eq4.9.9.1}
\abs{\mathcal F_{\sigma}\left(\chi_{-\frac{Y+T}{2}} b\right)\left(\frac{Y-T}{2}\right)} \leq C_N m\left(-\kappa^{-1}\left(\frac{Y+T}{2}\right)\right)\langle{Y-T\rangle}^{-N}, \quad Y,T \in \Lambda_{\Phi}.
\endeq
We get therefore, using (\ref{eq4.8.2}), (\ref{eq4.9}), (\ref{eq2.20.1}), and (\ref{eq4.9.9.1}), and the fact that $m$ is an order function,
\begin{multline}
\label{eq4.9.9.2}
\abs{K^{\rm eff}_{a^w}(x,z)} \leq C_N \int\!\!\!\int m\left(\kappa^{-1}\left(\frac{Y+T}{2}\right)\right)\langle{y-t\rangle}^{-N} e^{-\abs{x-y}^2/C} e^{-\abs{z-t}^2/C}\, L(dy)\, L(dt) \\
\leq C_N\, m\left(\kappa^{-1}\left(\frac{X+Z}{2}\right)\right)\langle{x-z\rangle}^{-N}, \quad N =1,2,\ldots
\end{multline}
Here $X, Z \in \Lambda_{\Phi}$ are the points above $x$, $z \in \mathbb C^n$, respectively. See also~\cite[Example 4.4]{Sj08}.

\bigskip
\noindent
Let next $m$ be an order function on $E \times E^*$, where $E = T^* \mathbb R^n$. Assuming that $a\in \widetilde{S}(m)$, we shall estimate the effective kernel of $a^w$. When doing so, we write using (\ref{eq3.14}), (\ref{eq2.20.1}), (\ref{eq4.8.2}), and (\ref{eq4.9}),
\begin{multline}
\label{eq4.10}
\abs{K^{\rm eff}_{a^w}(x,z)} \leq C\, \norm{a}_{\widetilde{S}(m)} \int\!\!\!\int m \left(q\left(\kappa^{-1}(Y), \kappa^{-1}(T)\right)\right) e^{-\abs{x-y}^2/C} e^{-\abs{z-t}^2/C}\, L(dy)\, L(dt) \\
= C\, \norm{a}_{\widetilde{S}(m)} \int\!\!\!\int m(\widetilde{q}(y,t)) e^{-\abs{(x,z)-(y,t)}^2/C}\,L(dy)\, L(dt).
\end{multline}
Here $Y = (y,(2/i)\partial_y\Phi(y))\in \Lambda_{\Phi}$, $T = (t,(2/i)\partial_t\Phi(t))\in \Lambda_{\Phi}$, and $\widetilde{q}$ is the real linear bijection
\[
\widetilde{q}: \mathbb C^n \times \mathbb C^n \rightarrow E \times E^*
\]
given by
\begeq
\label{eq4.10.1}
\widetilde{q}(y,t) = q\left(\kappa^{-1}(Y), \kappa^{-1}(T)\right) = \left(\kappa^{-1}\left(\frac{Y + T}{2}\right), J^{-1} \kappa^{-1}(T-Y)\right).
\endeq
A simple computation using (\ref{eq4.9.8}) shows that
\begeq
\label{eq4.10.2}
\left((\kappa \times (\Gamma \kappa \Gamma))\circ \kappa_U\right)(\widetilde{q}(x,\overline{z})) = \left(\left(x,\frac{2}{i}\frac{\partial \Phi}{\partial x}(x)\right), \left(z,\frac{2}{i}\frac{\partial \Phi^*}{\partial z}(z)\right)\right), \quad (x,z) \in \mathbb C^n \times \mathbb C^n.
\endeq

\medskip
\noindent
We obtain from (\ref{eq4.10}), using also that $m$ is an order function,
\begeq
\label{eq4.11}
\abs{K^{\rm eff}_{a^w}(x,z)} \leq C\, \norm{a}_{\widetilde{S}(m)} m(\widetilde{q}(x,z)), \quad (x,z) \in \mathbb C^n \times \mathbb C^n.
\endeq
Conversely, if $K\in {\rm Hol}(\mathbb C^n \times \mathbb C^n)$ is such that
\[
e^{-\Phi(x)}\abs{K(x,\overline{z})}e^{-\Phi(z)} \leq C\, m(\widetilde{q}(x,z)), \quad (x,z) \in \mathbb C^n \times \mathbb C^n,
\]
for some order function $m$ on $E \times E^*$, then it follows from~\cite[Proposition 5.1]{Sj08} and (\ref{eq4.10.2}) that
\[
K = \left(\mathcal T \otimes \widetilde{\mathcal T}\right)\circ U(a),
\]
for a unique $a\in \widetilde{S}(m)$, and thus, $e^{-\Phi(x)}K(x,\overline{z})e^{-\Phi(z)}$ is the effective kernel of $a^w$.

\medskip
\noindent
Using (\ref{eq4.6}), (\ref{eq4.11}) as our starting point, we shall now consider the action of pseudodifferential operators on $\widetilde{S}(m)$--spaces, where $m$ is an order function on $E = \mathbb R^n \times (\mathbb R^n)^*$. When doing so, we say that a linear map $T: \widetilde{S}(m) \rightarrow \widetilde{S}(\widehat{m})$ is weakly sequentially continuous if for each bounded sequence $u_j \in \widetilde{S}(m)$ such that $u_j \rightarrow 0$ in $\mathscr S'(\mathbb R^n)$, we have $Tu_j$ is bounded in $\widetilde{S}(\widehat{m})$ and $Tu_j \rightarrow 0$ in $\mathscr S'(\mathbb R^n)$.

\medskip
\noindent
The following result is closely related to~\cite[Theorem 7.11]{Sj08}, and the only possibly new point seems to be the uniqueness and the continuity properties of the extension.
\begin{theo}
\label{theo_bound_symbol}
Let $E = \mathbb R^n \times (\mathbb R^n)^*$, and let $m_1$ and $m_2$ be order functions on $E \times E^*$ and on $E$, respectively. Recalling the linear bijection $q$ in {\rm (\ref{eq1.3.1})}, assume that the integral
\begeq
\label{eq4.12}
m_3(x) = \int_E m_1(q(x,y))\, m_2(y)\,dy
\endeq
converges for at least one value of $x \in E$. Then it converges for all values and defines an order function $m_3$ on $E$. Let $a\in \widetilde{S}(m_1)$. Then the operator $a^w: \mathscr S(\mathbb R^n) \rightarrow \mathscr S'(\mathbb R^n)$ extends uniquely to a linear weakly sequentially continuous map
\begeq
\label{eq4.12.1}
a^w: \widetilde{S}(m_2) \rightarrow \widetilde{S}(m_3),
\endeq
which satisfies
\begeq
\label{eq4.12.2}
\norm{a^w u}_{\widetilde{S}(m_3)} \leq \mathcal O(1) \norm{a}_{\widetilde{S}(m_1)} \norm{u}_{\widetilde{S}(m_2)}, \quad u\in \widetilde{S}(m_2).
\endeq
\end{theo}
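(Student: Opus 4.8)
The plan is to push everything to the FBI–Bargmann transform side, where the effective kernel estimate (\ref{eq4.11}) already does the heavy lifting, and then to handle the extension by density. I would begin with the elementary claim that $m_3$ in (\ref{eq4.12}) is an order function. Since $q$ is linear and $q(x,y)-q(x',y)=q(x-x',0)$, we have $\langle q(x,y)-q(x',y)\rangle \leq C\langle x-x'\rangle$, so the order function inequality (\ref{eq1.0}) for $m_1$ gives $m_1(q(x,y)) \leq C_0 C^{N_0}\langle x-x'\rangle^{N_0} m_1(q(x',y))$ pointwise in $y$. Integrating in $y$ shows at once that convergence of (\ref{eq4.12}) at one point forces convergence everywhere, together with the bound $m_3(x) \leq C\langle x-x'\rangle^{N_0} m_3(x')$; measurability follows from Tonelli. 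This is the exact analogue of the corresponding step for Theorem \ref{theo_compos}.

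Fix now an FBI–Bargmann transform $\mathcal T: L^2(\mathbb R^n) \to H_\Phi(\mathbb C^n)$ as in (\ref{eq2.7})--(\ref{eq2.9}), with canonical transformation $\kappa$, and recall that $u\in\widetilde S(m_2)$ is characterized by (\ref{eq3.4}), i.e. $e^{-\Phi(z)}\abs{\mathcal Tu(z)} \leq C\norm{u}_{\widetilde S(m_2)}\, m_2(\kappa^{-1}(Z))$, where $Z=(z,(2/i)\partial_z\Phi(z))\in\Lambda_\Phi$. For $u\in\mathscr S(\mathbb R^n)$ the representation (\ref{eq4.6}) gives $\mathcal T(a^w u)(x) = \int K(x,\overline z)\,\mathcal Tu(z)\,e^{-2\Phi(z)}\,L(dz)$, and combining this with the effective kernel bound (\ref{eq4.11}), $e^{-\Phi(x)}\abs{K(x,\overline z)}e^{-\Phi(z)} \leq C\norm{a}_{\widetilde S(m_1)}\,m_1(\widetilde q(x,z))$ with $\widetilde q$ as in (\ref{eq4.10.1}), I would estimate
\[
e^{-\Phi(x)}\abs{\mathcal T(a^w u)(x)} \leq C\norm{a}_{\widetilde S(m_1)}\norm{u}_{\widetilde S(m_2)} \int_{\mathbb C^n} m_1(\widetilde q(x,z))\, m_2(\kappa^{-1}(Z))\, L(dz).
\]
Here $\widetilde q(x,z) = q(\kappa^{-1}(X),\kappa^{-1}(Z))$, so under the real-linear change of variables $z\mapsto z'=\kappa^{-1}(Z)$ (a bijection $\mathbb C^n\to E$ with constant Jacobian) the integral becomes a constant multiple of $\int_E m_1(q(\kappa^{-1}(X), z'))\,m_2(z')\,dz' = m_3(\kappa^{-1}(X))$. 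By the characterization (\ref{eq3.4}) applied to $m_3$, this is precisely the statement $a^w u\in\widetilde S(m_3)$ together with the norm bound (\ref{eq4.12.2}), for $u\in\mathscr S(\mathbb R^n)$.

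It remains to extend $a^w$ to all of $\widetilde S(m_2)$ and to establish uniqueness and weak sequential continuity. Given $u\in\widetilde S(m_2)$, Proposition \ref{density_prop} furnishes $u_\nu\in\mathscr S(\mathbb R^n)$, bounded in $\widetilde S(m_2)$, with $u_\nu\to u$ in $\mathscr S'(\mathbb R^n)$; hence $\mathcal Tu_\nu(z)\to\mathcal Tu(z)$ pointwise, with the uniform domination $e^{-\Phi(z)}\abs{\mathcal Tu_\nu(z)}\leq C m_2(\kappa^{-1}(Z))$. Inserting this into (\ref{eq4.6}) and dominating the integrand by $C\,m_1(\widetilde q(x,z))\,m_2(\kappa^{-1}(Z))\in L^1_z$, dominated convergence yields $\mathcal T(a^w u_\nu)(x)\to W(x):=\int K(x,\overline z)\,\mathcal Tu(z)\,e^{-2\Phi(z)}\,L(dz)$ pointwise, where $W$ is holomorphic and satisfies $e^{-\Phi(x)}\abs{W(x)}\leq C\norm{a}_{\widetilde S(m_1)}\norm{u}_{\widetilde S(m_2)}\,m_3(\kappa^{-1}(X))$ by the computation above. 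Testing against $\mathcal Tv$, $v\in\mathscr S(\mathbb R^n)$ (which decays rapidly relative to $e^{\Phi}$), a second dominated convergence argument gives $a^w u_\nu\to\mathcal T^{-1}W$ in $\mathscr S'(\mathbb R^n)$; since $W$ depends only on $\mathcal Tu$, the limit is independent of the sequence, and we set $a^w u:=\mathcal T^{-1}W$. The membership $a^w u\in\widetilde S(m_3)$ with (\ref{eq4.12.2}) is immediate from the bound on $W$ (or from the boundedness of $a^w u_\nu$ in $\widetilde S(m_3)$ together with the remark following Proposition \ref{density_prop}). The same domination argument applied to an arbitrary bounded sequence $w_j\in\widetilde S(m_2)$ with $w_j\to0$ in $\mathscr S'$ shows $a^w w_j\to0$ in $\mathscr S'$ and $a^w w_j$ bounded in $\widetilde S(m_3)$, i.e. weak sequential continuity; uniqueness then follows by applying weak sequential continuity to $u_\nu-u$ for any competing extension.

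The main obstacle I anticipate is not any single estimate but the bookkeeping needed to make the extension rigorous: verifying that the representation (\ref{eq4.6}) persists in the limit, that the limiting object $W$ is genuinely the FBI transform of a well-defined element of $\widetilde S(m_3)$, and that the two dominated convergence arguments (pointwise on $\mathbb C^n_x$, and after pairing with test functions) are justified by the single integrable majorant $m_1(\widetilde q(x,z))\,m_2(\kappa^{-1}(Z))$. Everything else --- the order function property of $m_3$, the a priori estimate, and uniqueness --- is then either a direct change of variables or a formal consequence of Proposition \ref{density_prop}.
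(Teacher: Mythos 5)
Your proposal is correct and follows essentially the same route as the paper: the key estimate is the effective kernel bound (\ref{eq4.11}) combined with the FBI characterization of $\widetilde{S}(m_2)$ from \cite[Proposition 5.1]{Sj08}, the extension is realized by the absolutely convergent integral $\int K(x,\overline{z})\,\mathcal Tu(z)\,e^{-2\Phi(z)}\,L(dz)$ (which the paper writes down directly for $u\in\widetilde{S}(m_2)$ rather than as a limit of $\mathcal T(a^wu_\nu)$, but this is the same object), and uniqueness follows from weak sequential continuity via dominated convergence together with Proposition \ref{density_prop}.
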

\begin{proof}
The first statement concerning the integral in (\ref{eq4.12}) is clear, since $m_1$ is an order function. Next, let $u\in \widetilde{S}(m_2)$ and recall from~\cite[Proposition 5.1]{Sj08} that
\begeq
\label{eq4.13}
\abs{\mathcal T u(z)} \leq \mathcal O(1) \norm{u}_{\widetilde{S}(m_2)} e^{\Phi(z)} m_2\left(\kappa^{-1}\left(z,\frac{2}{i}\frac{\partial \Phi}{\partial z}(z)\right)\right), \quad z\in \mathbb C^n.
\endeq
Here $\mathcal T$ is an FBI-Bargmann transform as in (\ref{eq2.8}), (\ref{eq2.9}). It follows from (\ref{eq4.9}), (\ref{eq4.11}), (\ref{eq4.13}), and the assumption in (\ref{eq4.12}) that for each $x\in \mathbb C^n$, we have
\[
\mathbb C^n \ni z \mapsto K(x,\overline{z}) \mathcal Tu(z) e^{-2\Phi(z)} \in L^1(\mathbb C^n),
\]
with the $L^1$--norm bounded by
\begeq
\label{eq4.14}
\mathcal O(1) \norm{a}_{\widetilde{S}(m_1)}\, \norm{u}_{\widetilde{S}(m_2)} e^{\Phi(x)} m_3\left(\kappa^{-1}\left(x,\frac{2}{i}\frac{\partial \Phi}{\partial x}(x)\right)\right).
\endeq
We also see that the function
\[
\mathbb C^n \ni x\mapsto \int K(x,\overline{z}) \mathcal Tu(z) e^{-2\Phi(z)}\, L(dz)
\]
is entire, of modulus not exceeding (\ref{eq4.14}). Another application of~\cite[Proposition 5.1]{Sj08} shows therefore that the tempered distribution $a^w u$ is well defined as an element of $\widetilde{S}(m_3)$, and (\ref{eq4.6}), (\ref{eq4.12.2}) hold. It remains for us to prove therefore that the map $\widetilde{S}(m_2) \ni u\mapsto a^w u \in \widetilde{S}(m_3)$ is weakly sequentially continuous, for then the density result of Proposition \ref{density_prop} will show that the extension is unique. To this end, let $u_j \in \widetilde{S}(m_2)$ be a bounded sequence such that $u_j \rightarrow 0$ in $\mathscr S'(\mathbb R^n)$, and let us write for $\varphi \in \mathscr S(\mathbb R^n)$ using (\ref{eq4.6}),
\begin{multline}
\label{eq4.15}
(a^w u_j, \varphi)_{L^2(\mathbb R^n)} = (\mathcal T(a^w u_j), \mathcal T\varphi)_{H_{\Phi}(\mathbb C^n)} \\
= \int\!\!\!\int K(x,\overline{z}) \mathcal Tu_j(z)\, \overline{\mathcal T\varphi(x)}\, e^{-2\Phi(z)}\, e^{-2\Phi(x)}\, L(dz)\, L(dx).
\end{multline}
Here the integrand tends to $0$ pointwise and the boundedness of $u_j$ in $\widetilde{S}(m_2)$ shows that we have for all $N$, uniformly in $j$,
\begin{multline}
\label{eq4.16}
\abs{K(x,\overline{z}) \mathcal Tu_j(z)\, \mathcal T\varphi(x)}\, e^{-2\Phi(z)}\, e^{-2\Phi(x)} \\
\leq \mathcal O_N(1)\, m_1(\widetilde{q}(x,z))\, m_2\left(\kappa^{-1}\left(z,\frac{2}{i}\frac{\partial \Phi}{\partial z}(z)\right)\right)\, \langle{x\rangle}^{-N} \in L^1(\mathbb C^{2n}_{x,z}),
\end{multline}
for all $N>1$ large enough. An application of dominated convergence gives therefore that $a^w u_j \rightarrow 0$ in $\mathscr S'(\mathbb R^n)$.
\end{proof}

\medskip
\noindent
{\it Remark}. Similar arguments show, under the assumptions of Theorem \ref{theo_bound_symbol}, that if $a_j$ is bounded in $\widetilde{S}(m_1)$
and $a_j \rightarrow 0$ in $\mathscr S'(T^* \mathbb R^n)$ then for each $u\in \widetilde{S}(m_2)$, we have $a^w_j u$ is bounded in $\widetilde{S}(m_3)$ and $a^w_j u \rightarrow 0$ in $\mathscr S'(\mathbb R^n)$.

\bigskip
\noindent
Proceeding in the same spirit, we shall next discuss the composition of pseudodifferential operators, leading to Theorem \ref{theo_compos}. Let $a_1 \in \widetilde{S}(m_1)$, $a_2 \in \widetilde{S}(m_2)$, where $m_1$, $m_2$ are order functions on $E \times E^*$, $E = T^* \mathbb R^n$, such that the integral in (\ref{eq1.6}) converges for at least one value of $(x,y) \in E \times E$. Using that $m_1$, $m_2$ are order functions, it is then easy to see that it converges for all values and defines an order function $m_3$ on $E \times E^*$. See also~\cite[Proposition 4.1]{Sj08}.

\medskip
\noindent
Let us write for $u\in \mathscr S(\mathbb R^n)$, using (\ref{eq4.6}), (\ref{eq4.9}),
\begeq
\label{eq4.17}
\mathcal T(a_2^w u)(x)e^{-\Phi(x)} = \int K^{{\rm eff}}_{a^w_2}(x,z) \mathcal Tu(z) e^{-\Phi(z)}\, L(dz).
\endeq
We get, in view of (\ref{eq4.11}), for all $N \geq 0$,
\begeq
\label{eq4.18}
\abs{\mathcal T(a_2^w u)(x)e^{-\Phi(x)}} \leq C_ N(u) \norm{a_2}_{\widetilde{S}(m_2)} \int m_2(\widetilde{q}(x,z)) \langle{z\rangle}^{-N}\, L(dz),
\endeq
where $C_N$ are continuous seminorms on $\mathscr S(\mathbb R^n)$. Taking $N>1$ large enough fixed, we see that the integral in (\ref{eq4.18}) converges for all $x\in \mathbb C^n$ and defines an order function $f_N$ on $E = \mathbb R^n \times (\mathbb R^n)^*$, given by
\begeq
\label{eq4.18.1}
f_N\left(\kappa^{-1}\left(x,\frac{2}{i}\frac{\partial \Phi}{\partial x}(x)\right)\right) := \int m_2(\widetilde{q}(x,z)) \langle{z\rangle}^{-N}\, L(dz).
\endeq
It follows, in view of (\ref{eq4.18}), (\ref{eq4.18.1}), and ~\cite[Proposition 5.1]{Sj08}, that the operator $a_2^w$ is a well defined linear continuous map
\begeq
\label{eq4.19}
a_2^w: \mathscr S(\mathbb R^n) \rightarrow \widetilde{S}(f_N).
\endeq
Next, using (\ref{eq1.6}) and (\ref{eq4.18.1}), we see that the integral
\begin{multline}
\label{eq4.21}
g_N(x) = \int_E m_1(q(x,z)) f_N(z)\,dz = C \int\!\!\!\int m_1(q(x,z)) m_2(q(z,y))\, \langle{\pi(\kappa(y))\rangle}^{-N}\, dy\, dz \\
= C \int m_3(q(x,y)) \langle{\pi(\kappa(y))\rangle}^{-N}\,dy, \quad C > 0,
\end{multline}
converges for all $x \in E$, for each $N$ large enough fixed, and defines an order function $g_N$ on $E$. Here $\pi: \Lambda_{\Phi}\ni (x,\xi) \mapsto x \in \mathbb C^n_x$ is the projection map. An application of Theorem \ref{theo_bound_symbol} shows that the operator $a^w_1$ has a unique linear weakly sequentially continuous extension
\begeq
\label{eq4.21.1}
a^w_1: \widetilde{S}(f_N) \rightarrow \widetilde{S}(g_N),
\endeq
which is also norm continuous. It follows from (\ref{eq4.19}) and (\ref{eq4.21.1}) that the composition
\begeq
\label{eq4.22}
a^w_1 \circ a^w_2: \mathscr S(\mathbb R^n) \rightarrow \widetilde{S}(g_N)
\endeq
is a well defined linear continuous map, and we get in view of (\ref{eq4.6}) for all $u\in \mathscr S(\mathbb R^n)$,
\begin{multline}
\label{eq4.23}
\mathcal T(a^w_1 a^w_2\, u)(x)e^{-\Phi(x)} = \int K^{{\rm eff}}_{a^w_1}(x,z) \mathcal T(a^w_2 u)(z)\,e^{-\Phi(z)}\, L(dz) \\
= \int\!\!\!\int K^{{\rm eff}}_{a^w_1}(x,z) K^{{\rm eff}}_{a^w_2}(z,y) \mathcal Tu(y)\, e^{-\Phi(y)}\, L(dz)\, L(dy) \\
= \int K^{{\rm eff}}_{a^w_1 a^w_2}(x,y)\mathcal Tu(y)\, e^{-\Phi(y)}\, L(dy).
\end{multline}
Here $K^{{\rm eff}}_{a^w_1 a^w_2}$ is given by the integral
\begeq
\label{eq4.24}
K^{{\rm eff}}_{a^w_1 a^w_2}(x,y) = \int K^{{\rm eff}}_{a^w_1}(x,z) K^{{\rm eff}}_{a^w_2}(z,y)\, L(dz),
\endeq
which converges absolutely for all $(x,y)$, and satisfies, in view of (\ref{eq1.6}) and (\ref{eq4.11}) ,
\begeq
\label{eq4.25}
\abs{K^{{\rm eff}}_{a^w_1 a^w_2}(x,y)} \leq \mathcal O(1) \norm{a_1}_{\widetilde{S}(m_1)}\, \norm{a_2}_{\widetilde{S}(m_2)} m_3(\widetilde{q}(x,y)), \quad (x,y) \in \mathbb C^n \times \mathbb C^n.
\endeq
Furthermore, the function $\mathbb C^{2n} \ni (x,y) \mapsto e^{\Phi(x)}K^{{\rm eff}}_{a^w_1 a^w_2}(x,\overline{y})e^{\Phi(\overline{y})} \in {\rm Hol}(\mathbb C^{2n})$, and we can write therefore, for a unique $a_1 \# a_2 \in \widetilde{S}(m_3)$,
\begeq
\label{eq4.26}
e^{\Phi(x)}K^{{\rm eff}}_{a^w_1 a^w_2}(x,\overline{y})e^{\Phi(\overline{y})} = \left(\left(\mathcal T \otimes \widetilde{\mathcal T}\right)\circ U\right)(a_1 \# a_2)(x,y).
\endeq
It follows from (\ref{eq4.25}) that
\begeq
\label{eq4.27}
\norm{a_1 \# a_2}_{\widetilde{S}(m_3)} \leq \mathcal O(1) \norm{a_1}_{\widetilde{S}(m_1)}\, \norm{a_2}_{\widetilde{S}(m_2)},
\endeq
and we have $a^w_1 \circ a^w_2 = (a_1 \# a_2)^w: \mathscr S(\mathbb R^n) \rightarrow \widetilde{S}(g_N)$, in view of (\ref{eq4.23}). The Weyl composition product
\[
\mathscr S(E) \times \mathscr S(E) \ni (a_1, a_2) \mapsto a_1 \# a_2 \in \mathscr S(E)
\]
has therefore a bilinear extension
\begeq
\label{eq4.28}
\widetilde{S}(m_1) \times \widetilde{S}(m_2) \ni (a_1, a_2) \mapsto a_1 \# a_2 \in \widetilde{S}(m_3),
\endeq
which is norm continuous. We now claim that the bilinear map (\ref{eq4.28}) is weakly sequentially continuous, as defined before the statement of Theorem \ref{theo_compos}. This will show the uniqueness of the extension in (\ref{eq4.28}), in view of Proposition \ref{density_prop}. Thus, let
$a_{1,j}$, $a_{2,j}$ be bounded sequences in $\widetilde{S}(m_1)$, $\widetilde{S}(m_2)$, respectively, and assume that $a_{1,j} \rightarrow a_1\in \widetilde{S}(m_1)$, $a_{2,j} \rightarrow a_2\in \widetilde{S}(m_2)$ in $\mathscr S'(E)$. In view of (\ref{eq4.27}), we only have to show that
$a_{1,j}\#a_{2,j} \rightarrow a_1\# a_2$ in $\mathscr S'(E)$. To this end, let $\varphi \in \mathscr S(E)$ and let us write
\begin{multline}
\label{eq4.29}
(a_{1,j}\#a_{2,j},\varphi)_{L^2(E)} = C (\widehat{\mathcal T}(a_{1,j}\#a_{2,j}),\widehat{\mathcal T}\varphi)_{H_F(\mathbb C^{2n})} \\
= C \int\!\!\!\int K^{{\rm eff}}_{a^w_{1,j} a^w_{2,j}}(x,\overline{y}) \overline{\widehat{\mathcal T}\varphi(x,y)} e^{-\Phi(x) - \Phi(\overline{y})}\, L(dx)\, L(dy), \quad C \neq 0.
\end{multline}
Here $\widehat{\mathcal T} := (\mathcal T \otimes \widetilde{\mathcal T})\circ U$ is the FBI-Bargmann transform introduced in (\ref{eq4.9.7}), the strictly plurisubharmonic quadratic weight $F$ is defined in (\ref{eq4.4}), and we have also used (\ref{eq4.26}). We get, using (\ref{eq4.24}) and (\ref{eq4.29}),
\begin{multline}
\label{eq4.30}
(a_{1,j}\#a_{2,j},\varphi)_{L^2(E)} = \\
C \int\!\!\!\int\!\!\!\int K^{{\rm eff}}_{a^w_{1,j}}(x,z) K^{{\rm eff}}_{a^w_{2,j}}(z,\overline{y}) \overline{\widehat{\mathcal T}\varphi(x,y)} e^{-\Phi(x) - \Phi(\overline{y})}\, L(dx)\, L(dy)\, L(dz).
\end{multline}
Here we have
\[
K^{{\rm eff}}_{a^w_{1,j}}(x,z) K^{{\rm eff}}_{a^w_{2,j}}(z,\overline{y}) \rightarrow K^{{\rm eff}}_{a^w_{1}}(x,z)
K^{{\rm eff}}_{a^w_{2}}(z,\overline{y}),
\]
pointwise, and the absolute value of the integrand in (\ref{eq4.30}) does not exceed, in view of (\ref{eq4.11}),
\begeq
\label{eq4.31}
\mathcal O_N(1) m_1(\widetilde{q}(x,z)) m_2(\widetilde{q}(z,\overline{y})) \langle{(x,y)\rangle}^{-N}, \quad N=1,2,\ldots ,
\endeq
uniformly in $j$. It follows from the assumption in (\ref{eq1.6}) that the function in (\ref{eq4.31}) is in $L^1(\mathbb C^{3n})$, provided that $N>1$ is sufficiently large, and dominated convergence gives that the integral in (\ref{eq4.30}) converges to
\begin{multline*}
\int\!\!\!\int\!\!\!\int K^{{\rm eff}}_{a^w_{1}}(x,z) K^{{\rm eff}}_{a^w_{2}}(z,\overline{y}) \overline{\widehat{\mathcal T}\varphi(x,y)} e^{-\Phi(x) - \Phi(\overline{y})}\, L(dx)\, L(dy)\, L(dz) \\
= \int\!\!\!\int K^{{\rm eff}}_{a^w_{1} a^w_{2}}(x,\overline{y}) \overline{\widehat{\mathcal T}\varphi(x,y)} e^{-\Phi(x) - \Phi(\overline{y})}\, L(dx)\, L(dy).
\end{multline*}
We obtain therefore that $a_{1,j}\#a_{2,j} \rightarrow a_1\# a_2$ in $\mathscr S'(E)$, and the proof of Theorem \ref{theo_compos} is complete.

\begin{appendix}
\section{Modulation spaces and Bargmann transforms}
\label{modFBI}

\medskip
\noindent
The purpose of this appendix is to recall the definition of the modulation spaces by means of metaplectic FBI-Bargmann transforms and to show their independence of the choice of such a transform. See~\cite[Chapter 11]{Gr_book},~\cite{BeOk_book}  for a comprehensive introduction to modulation spaces, and~\cite{SiTo} for their characterization by means of the standard Bargmann transform.

\medskip
\noindent
Let $\varphi$ be a holomorphic quadratic form on $\mathbb C^n_x \times \mathbb C^n_y$ such that
\begeq
\label{app1}
{\rm Im}\, \varphi''_{yy}>0,\quad {\rm det}\, \varphi''_{xy} \neq 0.
\endeq
Associated to $\varphi$ is the complex linear canonical transformation
\begeq
\label{app2}
\kappa_{\varphi}: T^*\mathbb C^{n} \ni (y,-\varphi'_y(x,y)) \mapsto (x,\varphi'_x(x,y))\in T^* \mathbb C^{n},
\endeq
and the metaplectic FBI-Bargmann transform
\begeq
\label{app3}
\mathcal T u(x) = C_{\varphi} \int e^{i\varphi(x,y)}\, u(y)\, dy.
\endeq
The constant $C_{\varphi} > 0$ here is chosen suitably so that the map $\mathcal T$ is unitary,
\begeq
\label{app4}
\mathcal T: L^2(\mathbb R^n) \rightarrow H^2_{\Phi}(\mathbb C^n):={\rm Hol}(\mathbb C^n) \cap L^2(\mathbb C^n, e^{-2\Phi}\,L(dx)),
\endeq
see~\cite[Section 1]{Sj96},~\cite[Section 12.2]{Sj02},~\cite[Section 1.3]{HiSj15}. Here $L(dx)$ is the Lebesgue measure on $\mathbb C^n$, and the strictly plurisubharmonic quadratic form $\Phi$ is given by
\begeq
\label{app5}
\Phi(x) = \sup_{y \in \mathbb R^n} \left(-{\rm Im}\, \,\varphi(x,y)\right).
\endeq
Associated to $\Phi$ is the $2n$-dimensional real linear subspace
\begeq
\label{app5.1}
\Lambda_{\Phi} = \left\{\left(x,\frac{2}{i}\frac{\partial \Phi}{\partial x}(x)\right); \, x\in \mathbb C^n\right\} \subseteq T^* \mathbb C^{n} = \mathbb C^n_x \times \mathbb C^n_{\xi},
\endeq
which is I-Lagrangian and R-symplectic, in the sense that the restriction of the complex symplectic (2,0)--form
\begeq
\label{app6}
\sigma = \sum_{j=1}^n d\xi_j \wedge dx_j
\endeq
on $T^* \mathbb C^{n}$ to $\Lambda_{\Phi}$ is real and non-degenerate. In particular, $\Lambda_{\Phi}$ is maximally totally real and we have
\begeq
\label{app6.1}
\kappa_{\varphi}\left(T^*\mathbb R^{n}\right) = \Lambda_{\Phi}.
\endeq

\medskip
\noindent
We also recall from~\cite[Proposition 6.1]{Horm91} that a holomorphic function $U$ on $\mathbb C^n$ is of the form $U = \mathcal T u$ for some unique $u\in \mathscr S'(\mathbb R^n)$, if and only if we have for some $C>0$ and $N$,
\begeq
\label{app7}
\abs{U(x)} \leq C \langle x\rangle^N e^{\Phi(x)}, \quad x\in \mathbb C^n.
\endeq

\bigskip
\noindent
Let us set, in analogy with (\ref{app4}),
\begeq
\label{app8}
H^p_{\Phi}(\mathbb C^n):={\rm Hol}(\mathbb C^n) \cap L^p(\mathbb C^n, e^{-p\Phi}\,L(dx)), \quad 1 \leq p < \infty,
\endeq
and write for $v\in H^p_{\Phi}(\mathbb C^n)$,
\begeq
\label{app9}
\norm{v}^p_{H^p_{\Phi}(\mathbb C^n)} = \int \abs{v(x)}^p e^{-p\Phi(x)}\, L(dx).
\endeq
An application of the mean value theorem for holomorphic functions combined with H\"older's inequality shows that there exists $C = C_{n,\Phi} > 0$ such that for all $v\in H^p_{\Phi}(\mathbb C^n)$ we have
\begeq
\label{app10}
\abs{v(x)} \leq C \norm{v}_{H^p_{\Phi}(\mathbb C^n)}\langle{x\rangle}^{2n/p} e^{\Phi(x)}, \quad x\in \mathbb C^n.
\endeq
Define also
\begeq
\label{app10.1}
H^{\infty}_{\Phi}(\mathbb C^n):={\rm Hol}(\mathbb C^n) \cap e^{\Phi} L^{\infty}(\mathbb C^n),
\endeq
equipped with the natural norm. It follows from (\ref{app10}) that $H^p_{\Phi}(\mathbb C^n) \subseteq \mathcal T\left(\mathscr S'(\mathbb R^n)\right)$, for all $1\leq p \leq \infty$.

\medskip
\noindent
{\it Remark}. In Section \ref{sect_bounded}, we recalled that the reproducing property
\begeq
\label{app10.2}
\Pi_{\Phi} v = v
\endeq
holds for all $v\in \mathcal T\left(\mathscr S'(\mathbb R^n)\right)$, and thus in particular for all $v\in H^p_{\Phi}(\mathbb C^n)$, $1\leq p \leq \infty$. Here $\Pi_{\Phi}$ is the orthogonal projection introduced in (\ref{eq3.24}). Combining (\ref{eq3.24}), (\ref{eq3.25}), (\ref{app10.2}), and H\"older's inequality, we obtain that (\ref{app10}) can be improved to
\[
\abs{v(x)} \leq C_p \norm{v}_{H^p_{\Phi}(\mathbb C^n)}e^{\Phi(x)}, \quad x\in \mathbb C^n,
\]
for all $v\in H^p_{\Phi}(\mathbb C^n)$, $1\leq p \leq \infty$. It follows that
\[
H^p_{\Phi}(\mathbb C^n) \subseteq H^{p'}_{\Phi}(\mathbb C^n), \quad 1\leq p \leq p' \leq \infty.
\]

\medskip
\noindent
We would like to show that the space
\[
\left \{u\in \mathscr S'(\mathbb R^n); \mathcal T u \in H^p_{\Phi}(\mathbb C^n)\right\}
\]
does not depend on the choice of an FBI-Bargmann transform $\mathcal T$ in (\ref{app3}). To this end, we have the following result.

\begin{prop}
\label{indep_FBI}
Let $\mathcal T$, $\widetilde{\mathcal T}$ be two FBI-Bargmann transforms as in {\rm (\ref{app1})}, {\rm (\ref{app3})}, and let $\Phi$, $\widetilde{\Phi}$ be the associated strictly plurisubharmonic quadratic weights. When $u \in \mathscr S'(\mathbb R^n)$, we have $\mathcal T u \in H^p_{\Phi}(\mathbb C^n)$, for some $1\leq p \leq \infty$, precisely when $\widetilde{\mathcal T} u \in H^p_{\widetilde{\Phi}}(\mathbb C^n)$, with the corresponding equivalence of norms.
\end{prop}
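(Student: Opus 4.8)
The plan is to reduce the statement to a Gaussian kernel bound for the transition operator $\widetilde{\mathcal T}\circ \mathcal T^{-1}$ and then apply Young's inequality. By symmetry it is enough to prove one implication with the accompanying norm estimate: if $u\in \mathscr S'(\mathbb R^n)$ satisfies $\mathcal T u \in H^p_{\Phi}(\mathbb C^n)$, then $\widetilde{\mathcal T}u \in H^p_{\widetilde\Phi}(\mathbb C^n)$ and $\norm{\widetilde{\mathcal T}u}_{H^p_{\widetilde\Phi}(\mathbb C^n)} \leq C\,\norm{\mathcal T u}_{H^p_{\Phi}(\mathbb C^n)}$; interchanging the roles of $\mathcal T$ and $\widetilde{\mathcal T}$ yields the reverse inequality and hence the equivalence of norms.

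First I would write $\widetilde{\mathcal T}u = \bigl(\widetilde{\mathcal T}\circ \mathcal T^{-1}\bigr)\mathcal T u$ and realize the unitary map $\widetilde{\mathcal T}\circ \mathcal T^{-1}:H_{\Phi}(\mathbb C^n)\to H_{\widetilde\Phi}(\mathbb C^n)$ as an integral operator. Composing the Gaussian representation (\ref{app3}) of $\widetilde{\mathcal T}$ with that of $\mathcal T^{-1}=\mathcal T^*$ and evaluating the resulting Gaussian integral over $\mathbb R^n$ by stationary phase, as in \cite[Lemma 13.2]{Zw12}, produces a holomorphic quadratic kernel $k(x,\overline z)=C\,e^{2\Psi_0(x,\overline z)}$, with $\Psi_0$ a holomorphic quadratic form on $\mathbb C^n_x\times \mathbb C^n_z$, in analogy with (\ref{eq3.24}), so that
\[
\widetilde{\mathcal T}u(x)=\int_{\mathbb C^n} k(x,\overline z)\,\mathcal T u(z)\,e^{-2\Phi(z)}\,L(dz),\quad u\in \mathscr S'(\mathbb R^n).
\]
For Schwartz $u$ this is a direct computation, and it persists for tempered $u$ by the temperate growth bound (\ref{app7}), the reproducing property (\ref{app10.2}), and an approximation argument in the spirit of the proof of Theorem \ref{rank_one}.

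The heart of the matter is a Gaussian bound for the associated effective kernel: there exist an invertible $\mathbb R$-linear map $B:\mathbb C^n\to \mathbb C^n$ and a constant $C>0$ such that
\[
\widetilde\Phi(x)+\Phi(z)-2\Re\Psi_0(x,\overline z)\asymp \abs{x-Bz}^2,\quad (x,z)\in \mathbb C^n\times \mathbb C^n.
\]
This is the analogue, for two distinct transforms, of the estimate (\ref{eq3.25}) valid when $\mathcal T=\widetilde{\mathcal T}$, in which case $B$ is the identity. Here $B$ is the $\mathbb R$-linear map induced on the base $\mathbb C^n$ by the complex canonical transformation $\kappa_{\widetilde\varphi}\circ \kappa_{\varphi}^{-1}$, which carries $\Lambda_\Phi$ onto $\Lambda_{\widetilde\Phi}$, via the base projections $\Lambda_\Phi\to \mathbb C^n$ and $\Lambda_{\widetilde\Phi}\to \mathbb C^n$. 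I expect the verification of this comparability to be the main obstacle, since one must track how the two strictly plurisubharmonic weights and the two phases with $\mathrm{Im}\,\varphi''_{yy}>0$, $\mathrm{Im}\,\widetilde\varphi''_{yy}>0$ combine after the stationary phase reduction; the positivity should follow along the lines of \cite[Proposition 1.2.8]{HiSj15} used for (\ref{eq2.19}).

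Granting this bound, the conclusion is routine. Multiplying the integral representation by $e^{-\widetilde\Phi(x)}$ gives
\[
e^{-\widetilde\Phi(x)}\abs{\widetilde{\mathcal T}u(x)}\leq C\int_{\mathbb C^n} e^{-\abs{x-Bz}^2/C}\,\abs{\mathcal T u(z)}\,e^{-\Phi(z)}\,L(dz).
\]
After the substitution $w=Bz$ the right-hand side is the convolution of the integrable Gaussian $e^{-\abs{\cdot}^2/C}$ with $(\abs{\mathcal T u}\,e^{-\Phi})\circ B^{-1}$, so Young's inequality yields $\norm{\widetilde{\mathcal T}u}_{H^p_{\widetilde\Phi}(\mathbb C^n)}\leq C\,\norm{\mathcal T u}_{H^p_{\Phi}(\mathbb C^n)}$ for every $1\leq p\leq \infty$, with $C$ depending only on $B$ and the constant in the Gaussian bound. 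This completes the argument, and the reverse inequality follows by symmetry.
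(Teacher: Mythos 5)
Your overall strategy coincides with the paper's: factor $\widetilde{\mathcal T}u=\widetilde{\mathcal T}\mathcal T^*\mathcal Tu$, compute $\widetilde{\mathcal T}\mathcal T^*$ by quadratic stationary phase to get a kernel $Ce^{2q(x,\overline w)}$ with $q$ a holomorphic quadratic form, extend to $\mathscr S'$ by a Banach--Steinhaus/dominated convergence argument, and conclude via the Gaussian bound (\ref{app15}) together with an $L^1$-convolution estimate (you invoke Young's inequality after the substitution $w=Bz$, the paper invokes Schur's lemma via the bijectivity of $\chi$ in (\ref{app14.2}); these are interchangeable). Your map $B$ is exactly the paper's $\chi$.

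The gap is that the central estimate
\[
\widetilde\Phi(x)+\Phi(z)-2\,\Re q(x,\overline z)\asymp \abs{x-\chi(z)}^2
\]
is asserted but not proved, and you yourself flag it as the main obstacle. The route you sketch --- deducing positivity ``along the lines of \cite[Proposition 1.2.8]{HiSj15}'' as in (\ref{eq2.19}) --- does not apply directly here: that proposition concerns a single Lagrangian plane strictly positive relative to $\Lambda_\Phi$, whereas here one must compare two different weights through the kernel of a metaplectic Fourier integral operator. The paper's actual argument (following \cite[Proposition 3.1]{CoHiSj19}) is: (i) show that ${\rm Graph}(\kappa\circ\Gamma)\cap(\Lambda_{\widetilde\Phi}\times\Lambda_{\Phi^*})$ is Lagrangian for $\sigma_{x,\xi}+\sigma_{z,\zeta}$, using $\sigma(\Gamma t,\Gamma s)=-\overline{\sigma(t,s)}$ and the reality of $\sigma|_{\Lambda_{\Phi^*}}$; (ii) conclude from \cite[Section 2]{MeSj03} that its $(x,z)$-projection is maximally totally real; (iii) verify by differentiating the critical value, as in (\ref{app19})--(\ref{app20}), that the strictly plurisubharmonic quadratic form $F(x,z)=\widetilde\Phi(x)+\Phi^*(z)-2\Re q(x,z)$ vanishes to second order along that subspace, whence $F\asymp{\rm dist}(\cdot,\,\cdot)^2$ and (\ref{app15}) follows. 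Without some version of steps (i)--(iii), the nonnegativity and, more importantly, the nondegeneracy of the quadratic form on the left (equivalently, the invertibility of $B$ together with the strict positivity transverse to its graph) are not established, and the Young/Schur step has nothing to act on. Everything else in your outline is sound.
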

\begin{proof}
Let $u\in \mathscr S(\mathbb R^n)$, and let us write
\begeq
\label{app11}
\widetilde{\mathcal T}u = \widetilde{\mathcal T} \mathcal T^* \mathcal Tu.
\endeq
Here $\mathcal T^*$ is the Hilbert space adjoint of $\mathcal T$ in (\ref{app4}) given by
\begeq
\label{app12}
\mathcal T^*v(y) = C_{\varphi} \int e^{-i\varphi^*(\overline{w},y)} v(w) e^{-2\Phi(w)}\, L(dw), \quad y\in \mathbb R^n,
\endeq
where $\varphi^*(w,y) = \overline{\varphi(\overline{w},\overline{y})}$ is holomorphic. If $\widetilde{\varphi}$ is a holomorphic quadratic form associated to $\widetilde{\mathcal T}$, we get, by an application of the method of exact (quadratic) stationary phase~\cite[Lemma 13.2]{Zw12},
\begeq
\label{app13}
\widetilde{\mathcal T} \mathcal T^* v(x) = C_{\widetilde{\varphi}} C_{\varphi} C \int e^{2q(x,\overline{w})} v(w) e^{-2\Phi(w)}\, L(dw), \quad C \neq 0.
\endeq
Here $v\in \mathcal T(\mathscr S(\mathbb R^n)))$ and $q$ is a holomorphic quadratic form on $\mathbb C^n_x \times \mathbb C^n_z$ given by the critical value
\begeq
\label{app14}
q(x,z) = \frac{i}{2} {\rm vc}_{y} \left(\widetilde{\varphi}(x,y) - \varphi^*(z,y)\right).
\endeq
It is natural to regard the operator in (\ref{app13}) as a metaplectic Fourier integral operator associated to the complex linear canonical transformation $\kappa = \kappa_{\widetilde{\varphi}} \circ \kappa_{\varphi}^{-1}: T^*\mathbb C^{n} \rightarrow T^*\mathbb C^{n}$, satisfying, in view of (\ref{app6.1}),
\begeq
\label{app14.1}
\kappa(\Lambda_{\Phi}) = \Lambda_{\widetilde{\Phi}}.
\endeq
It will also be convenient to introduce the real linear bijection
\begeq
\label{app14.2}
\chi: \mathbb C^n \ni w\mapsto \pi_x\left(\kappa\left(w,\frac{2}{i}\partial_w \Phi(w)\right)\right) \in \mathbb C^n,
\endeq
where $\pi_x: \mathbb C^{2n}\ni (x,\xi) \mapsto x\in \mathbb C^n$ is the natural projection. Following an argument of~\cite[Proposition 3.1]{CoHiSj19}, we shall now show the essentially well known fact that
\begeq
\label{app15}
\widetilde{\Phi}(x) + \Phi(w) - 2\,{\rm Re}\, q(x,\overline{w}) \asymp \abs{x - \chi(w)}^2,\quad (x,w) \in \mathbb C^n_x \times \mathbb C^n_w.
\endeq
See also~\cite[Section 6]{Horm91}. When verifying (\ref{app15}), we set $\Phi^*(z) = \Phi(\overline{z})$ and introduce the antilinear involution
\begeq
\label{app15.1}
\Gamma: \mathbb C^{2n} \ni (z,\zeta) \mapsto (\overline{z},-\overline{\zeta})\in \mathbb C^{2n},
\endeq
mapping $\Lambda_{\Phi}$ bijectively onto $\Lambda_{\Phi^*}$. It follows from (\ref{app14.1}) that
\begeq
\label{app16}
\kappa \circ \Gamma: \Lambda_{\Phi^*}\ni \left(z,\frac{2}{i}\partial_z \Phi^*(z)\right) \rightarrow \left(x,\frac{2}{i}\partial_x \widetilde{\Phi}(x)\right) \in \Lambda_{\widetilde{\Phi}},
\endeq
and continuing to follow~\cite[Proposition 3.1]{CoHiSj19}, let us check that the graph ${\rm Graph}(\kappa \circ \Gamma) \cap \left(\Lambda_{\widetilde{\Phi}} \times \Lambda_{\Phi^*}\right)$ of the map in (\ref{app16}) is a Lagrangian subspace of the real symplectic space $\Lambda_{\widetilde{\Phi}} \times \Lambda_{\Phi^*}$, with respect to the standard symplectic form
\[
\sigma_{x,\xi} + \sigma_{z,\zeta} = \sum_{j=1}^n d\xi_j \wedge dx_j + \sum_{j=1}^n d\zeta_j \wedge dz_j
\]
on $\mathbb C^{2n}_{x,\xi} \times \mathbb C^{2n}_{z,\zeta}$, restricted to $\Lambda_{\widetilde{\Phi}} \times \Lambda_{\Phi^*}$. When doing so, let $(t,s)\in \Lambda_{\Phi^*} \times \Lambda_{\Phi^*}$, and let us compute, writing $\sigma$ for the standard symplectic form on $\mathbb C^{2n}$ given in (\ref{app6}),
\[
\sigma(\kappa(\Gamma(t)), \kappa(\Gamma(s))) + \sigma(t,s) = \sigma(\Gamma(t), \Gamma(s)) + \sigma(t,s) = - \overline{\sigma(t,s)} + \sigma(t,s) = 0,
\]
since $\sigma(t,s)$ is real. Here we have also used that $\sigma (\Gamma t,\Gamma s) = -\overline{\sigma (t,s)}$, in view of (\ref{app15.1}). It follows then from~\cite[Section 2]{MeSj03} that the projection
\[
\pi_{x,z}\left({\rm Graph}(\kappa \circ \Gamma)\cap \left(\Lambda_{\widetilde{\Phi}} \times \Lambda_{\Phi^*}\right)\right)
\]
of the graph ${\rm Graph}(\kappa \circ \Gamma)\cap (\Lambda_{\widetilde{\Phi}} \times \Lambda_{\Phi^*})$ to $\mathbb C^{2n}_{x,z}$, is maximally totally real.

\medskip
\noindent
Next, we get using (\ref{app14}),
\begeq
\label{app17}
\partial_x q(x,z) = \frac{i}{2}\partial_x \widetilde{\varphi}(x,y), \quad \partial_z q(x,z) = - \frac{i}{2}\partial_z \varphi^*(z,y),
\endeq
where $y = y(x,z)\in \mathbb C^n$ is the unique critical point corresponding to the critical value in (\ref{app14}), so that
\begeq
\label{app18}
\partial_y \widetilde{\varphi}(x,y) = \partial_y \varphi^*(z,y).
\endeq
Restricting the attention to points $(x,z) \in \pi_{x,z}\left({\rm Graph}(\kappa \circ \Gamma)\cap \Lambda_{\widetilde{\Phi}}\times \Lambda_{\Phi^*}\right)$, we see that then the critical point $y\in \mathbb R^n$ is real, with
\[
\partial_y \widetilde{\varphi}(x,y) = \partial_y \varphi^*(z,y) = \partial_y \varphi(\overline{z},y) \in \mathbb R^n,
\]
and
\[
\kappa_{\varphi}(y, -\partial_y \varphi(\overline{z},y)) = \left(\overline{z}, \frac{2}{i}(\partial_z \Phi)(\overline{z})\right), \quad
\kappa_{\widetilde{\varphi}}(y,-\partial_y \widetilde{\varphi}(x,y)) = \left(x,\frac{2}{i}\partial_x \widetilde{\Phi}(x)\right).
\]
At the points
\[
(x,z) \in \pi_{x,z}\left({\rm Graph}(\kappa \circ \Gamma)\cap \Lambda_{\widetilde{\Phi}}\times \Lambda_{\Phi^*}\right),
\]
we get therefore, using also (\ref{app17}),
\begeq
\label{app19}
\partial_x q(x,z) = \partial_x \widetilde{\Phi}(x),
\endeq
and
\begeq
\label{app20}
\partial_z q(x,z) = -\frac{i}{2} \partial_z \varphi^*(z,y) = -\frac{i}{2} \overline{\partial_z \varphi(\overline{z},y)} = (\partial_{\overline{z}}\Phi)(\overline{z}) = \partial_z \Phi^*(z).
\endeq
It follows from (\ref{app19}), (\ref{app20}) that the strictly plurisubharmonic quadratic form
\[
F(x,z) = \widetilde{\Phi}(x) + \Phi^*(z) - 2{\rm Re}\, q(x,z), \quad (x,z) \in \mathbb C^n_x \times \mathbb C^n_z,
\]
vanishes to the second order along the maximally totally real subspace
\begeq
\label{app21}
\pi_{x,z}\left({\rm Graph}(\kappa \circ \Gamma)\cap \Lambda_{\widetilde{\Phi}}\times \Lambda_{\Phi^*}\right) \subseteq \mathbb C^n_x \times \mathbb C^n_z,
\endeq
implying that
\begeq
\label{app22}
F(x,z) \asymp {\rm dist}\left((x,z), \pi_{x,z}\left({\rm Graph}(\kappa \circ \Gamma)\cap \Lambda_{\widetilde{\Phi}}\times \Lambda_{\Phi^*}\right)\right)^2.
\endeq
We have therefore shown (\ref{app15}).

\medskip
\noindent
By a density argument, we shall now extend (\ref{app11}) to all $u\in \mathscr S'(\mathbb R^n)$. Indeed, let $u_j \in \mathscr S(\mathbb R^n)$ be such that $u_j \rightarrow u\in \mathscr S'(\mathbb R^n)$ in $\mathscr S'(\mathbb R^n)$, and let us write in view of (\ref{app11}), (\ref{app13}),
\begeq
\label{app23}
\widetilde{\mathcal T}u_j(x) = C \int e^{2q(x,\overline{w})} \mathcal Tu_j(w) e^{-2\Phi(w)}\, L(dw), \quad C \neq 0,\quad j = 1,2\ldots
\endeq
Here $\widetilde{\mathcal T}u_j \rightarrow \widetilde{\mathcal T}u$, $\mathcal T u_j \rightarrow \mathcal T u$ pointwise, and the Banach-Steinhaus theorem gives a uniform bound for some $M \geq 0$,
\begeq
\label{eq24}
\abs{\mathcal Tu_j(w)} \leq C \langle{w\rangle}^M e^{\Phi(w)}, \quad w\in \mathbb C^n,\,\,\, j=1,2,\ldots
\endeq
Recalling also (\ref{app15}), we obtain therefore by dominated convergence,
\begeq
\label{app25}
\widetilde{\mathcal T}u(x) = C \int e^{2q(x,\overline{w})} \mathcal Tu(w) e^{-2\Phi(w)}\, L(dw), \quad u\in \mathscr S'(\mathbb R^n),
\endeq
and hence
\begeq
\label{app26}
\abs{\widetilde{\mathcal T} u(x)}e^{-\widetilde{\Phi}(x)} \leq C \int e^{-\abs{x - \chi(w)}^2/C}\, \abs{\mathcal Tu(w)}\, e^{-\Phi(w)}\, L(dw), \quad u \in \mathscr S'(\mathbb R^n).
\endeq
Assuming that $u\in \mathscr S'(\mathbb R^n)$ is such that $\mathcal Tu\in H_{\Phi}^p(\mathbb C^n)$, for some $1\leq p \leq \infty$, and applying Schur's lemma to (\ref{app26}), in view of the bijectivity of $\chi$ in (\ref{app14.2}), we get that
$\widetilde{\mathcal T}u \in H^p_{\widetilde{\Phi}}(\mathbb C^n)$ and
\begeq
\label{app27}
\norm{\widetilde{\mathcal T}u}_{H^p_{\widetilde{\Phi}}(\mathbb C^n)} \leq C \norm{\mathcal T u}_{H^p_{\Phi}(\mathbb C^n)}.
\endeq
Interchanging the roles of $\mathcal T$, $\widetilde{\mathcal T}$, we complete the proof.
\end{proof}

\medskip
\noindent
It follows from Proposition \ref{indep_FBI} that when $\mathcal T$ is a metaplectic FBI-Bargmann transform as in (\ref{app1}), (\ref{app3}), the modulation space
\begeq
\label{app30}
M^p(\mathbb R^n) = \{u\in \mathscr S'(\mathbb R^n); \mathcal Tu \in H^p_{\Phi}(\mathbb C^n)\}, \quad 1\leq p \leq \infty,
\endeq
is defined unambiguously. When equipped with the norm
\[
\norm{u}_{M^p(\mathbb R^n)} = \norm{\mathcal T u}_{H^p_{\Phi}(\mathbb C^n)},
\]
it becomes a Banach space, and $\mathscr S(\mathbb R^n)$ is a dense subspace, for $p < \infty$. Indeed, if $u\in M^p(\mathbb R^n)$ and $\chi \in C^{\infty}_0(\mathbb C^n)$ is such that $\chi = 1$ in a neighborhood of $0$, we see that the functions
\[
u_j = \mathcal T^*\left(\chi\left(\frac{\cdot}{j}\right)\mathcal T u\right) \in \mathscr S(\mathbb R^n)
\]
satisfy $u_j \rightarrow u$ in $M^p(\mathbb R^n)$. The map
\[
\mathcal T: M^p(\mathbb R^n) \rightarrow H_{\Phi}^p(\mathbb C^n), \quad 1\leq p \leq \infty,
\]
is an isometric bijection.

\medskip
\noindent
{\it Remark}. Let
\[
\mathcal F_0 u(\xi) = \widehat{u}(\xi) = \frac{1}{(2\pi)^{n/2}}\int e^{-ix\cdot \xi}\, u(x)\, dx
\]
be the unitary Fourier transformation on $\mathbb R^n$ and let $\mathcal T$ be an FBI-Bargmann transform, as in (\ref{app1}), (\ref{app3}). It was remarked in~\cite[Appendix A.1]{HelSj} that $\mathcal T \circ \mathcal F_0$ is also an FBI-Bargmann transform. Indeed, if $\varphi$ is a holomorphic quadratic form on $\mathbb C^{2n}$ defining $\mathcal T$, then quadratic stationary phase shows that $\mathcal T \circ \mathcal F_0$ is defined by
the critical value
\begeq
\label{app31}
\psi(x,y) = {\rm vc}_{\eta} \left(\varphi(x,\eta) - y\cdot \eta\right).
\endeq
We have
\[
\psi'_x(x,y) = \varphi'_x(x, \eta(x,y)), \quad \psi''_{xy} = \varphi''_{x\eta}\,\eta'_y(x,y),
\]
where $\eta = \eta(x,y) \in \mathbb C^n$ is the unique critical point corresponding to the critical value in (\ref{app31}). Thus, $\varphi'_{\eta}(x,\eta) = y$, $\varphi''_{\eta \eta}\,\eta'_y = 1$, and recalling (\ref{app1}) we conclude that ${\rm det}\, \psi''_{xy} \neq 0$. We also see that the complex linear canonical transformation
\[
\kappa_{\psi}: \mathbb C^{2n} \ni (y, -\psi'_y(x,y)) \mapsto (x,\psi'_x(x,y)) \in \mathbb C^{2n}
\]
is of the form $\kappa_{\psi} = \kappa_{\varphi} \circ \kappa_{{\mathcal F}_0}$, where $\kappa_{\varphi}$ is given in (\ref{app2}), and
$\kappa_{{\mathcal F}_0}(y,\eta) = (\eta,-y)$. It follows therefore from~\cite[Section 1]{SjHokk},~\cite[Proposition 3.3]{HiZw25} that ${\rm Im}\, \psi''_{yy} > 0$. In this discussion, we could have replaced the Fourier transformation by a more general metaplectic operator with real phase,  see~\cite[Theorem 18.5.9]{H3} and Section \ref{sect_compos}.

\medskip
\noindent
In the special case when
\begeq
\label{app32}
\varphi(x,y) = \frac{i}{2}(x-y)^2 - \frac{i}{4}x^2,
\endeq
a simple computation using (\ref{app31}) shows that $\psi(x,y) = \varphi(-ix,y)$, and we get
\[
(\mathcal T \circ \mathcal F_0 u)(x) = C_{\varphi} \int e^{i\psi(x,y)} u(y)\, dy = \left(\mathcal R \circ \mathcal T u\right)(x),
\]
where $(\mathcal R v)(x) = v(-ix)$. In other words,
\begeq
\label{app33}
\mathcal T \circ \mathcal F_0 = \mathcal R \circ \mathcal T, \quad \mathcal T \circ \mathcal F_0^{-1} = \mathcal R^{-1} \circ \mathcal T.
\endeq
See also~\cite[Appendix A.1]{HelSj},~\cite[Lemma 2.3]{Wick}. The quadratic weight associated to (\ref{app32}) is the radial weight given by
\begeq
\label{app34}
\Phi(x) = {\rm sup}_{y\in \mathbb R^n} \left(-{\rm Im}\, \varphi(x,y)\right) = \frac{\abs{x}^2}{4},
\endeq
and using (\ref{app30}), (\ref{app33}), and (\ref{app34}), we recover the well known fact that
\[
\mathcal F_0: M^p(\mathbb R^n) \rightarrow M^p(\mathbb R^n)
\]
is linear homeomorphism. See~\cite[Theorem 11.3.5]{Gr_book}.

\medskip
\noindent
{\it Remark}. Let $\ell$ be a complex linear form on $T^*\mathbb C^{n}$ whose restriction to $T^*\mathbb R^{n}$ is real, and recalling the complex linear canonical transformation $\kappa_{\varphi}$ in (\ref{app2}), let us set $k = \ell \circ \kappa_{\varphi}^{-1}$. The restriction of the complex linear form $k$ on $T^*\mathbb C^{n}$ to $\Lambda_{\Phi}$ in (\ref{app5.1}) is real, and we have the exact Egorov property
\begeq
\label{app35}
\mathcal T \circ e^{-i\ell(x,D_x)} = e^{-ik(x,D_x)}\circ \mathcal T,
\endeq
see~\cite[Proposition 1.4]{Sj95},~\cite[Theorem 1.4.2]{HiSj15}. Here the magnetic translation operators $e^{-i\ell(x,D_x)}$, $e^{-ik(x,D_x)}$, given explicitly by (\ref{eq2.12.2}), are unitary on $L^2(\mathbb R^n)$, $H_{\Phi}^2(\mathbb C^n)$, respectively. Given $v\in {\rm Hol}(\mathbb C^n)$, we have seen in~\cite[(2.7)]{HiLaSjZe} that
\begeq
\label{app36}
\abs{\left(e^{-\Phi}e^{-ik(x,D_x)}u\right)(x)} = \abs{\left(e^{-\Phi}u\right)(x- k'_{\xi})},\quad x\in \mathbb C^n,
\endeq
see also (\ref{eq2.20}). It follows that the operator
\[
e^{-ik(x,D_x)}: H^p_{\Phi}(\mathbb C^n) \rightarrow H^p_{\Phi}(\mathbb C^n), \quad 1 \leq p \leq \infty,
\]
is an isometric bijection, and in view of (\ref{app35}), the same holds for the map
\[
e^{-i\ell(x,D_x)}: M^p(\mathbb R^n) \rightarrow M^p(\mathbb R^n),\quad 1 \leq p \leq \infty.
\]
See also~\cite[Theorem 11.3.5]{Gr_book}.
\end{appendix}


\begin{thebibliography}{0}

\bibitem[AnLe14]{AnLe} N. Anantharaman and M. L\'eautaud, {\it Sharp polynomial decay rates for the damped wave equation on the torus (With an appendix by St\'ephane Nonnenmacher)}, Anal. PDE {\bf 7} (2014), 159–-214.

\bibitem[B\'eOk29]{BeOk_book} \'A. B\'enyi and K. Okoudjou, {\it Modulation spaces -- with applications to pseudodifferential operators and nonlinear Schr\"odinger equations}, Birkh\"auser/Springer, New York, 2020.

\bibitem[Bo94]{Bo94} J. M. Bony, {\it Op\'erateurs int\'egraux de Fourier et calcul de Weyl-H\"ormander (cas d'une métrique symplectique)}, Journ\'ees "\'Equations aux D\'eriv\'ees Partielles'' (Saint-Jean-de-Monts, 1994), Exp. No. IX, 14 pp., \'Ecole Polytech., Palaiseau, 1994.

\bibitem[Bou97]{Bou97} A. Boulkhemair, {\it Remarks on a Wiener type pseudodifferential algebra and Fourier integral operators},
Math. Res. Lett. {\bf 4} (1997), 53–-67.

\bibitem[Cao${}^*$20]{Wick} G. Cao, J. Li, M. Shen, B. Wick, and L. Yan, {\it A boundedness criterion for singular integral
operators of convolution type on the Fock space}, Adv. Math. {\bf 363} (2020), 33 pp.

\bibitem[CoHiSj19]{CoHiSj19} L. Coburn, M. Hitrik, and J. Sj\"ostrand, {\it Positivity, complex FIOs, and Toeplitz operators},
Pure Appl. Anal. {\bf 1} (2019), 327–-357.

\bibitem[CoGrNiRo13]{Co13} E. Cordero, K. Gr\"ochenig, F. Nicola, and L. Rodino, {\it Wiener algebras of Fourier integral operators}, J. Math. Pures Appl. {\bf 99} (2013), 219–-233.

\bibitem[CoNi10]{CoNi} E. Cordero and F. Nicola, {\it Pseudodifferential operators on $L^p$, Wiener amalgam and modulation spaces},
Int. Math. Res. Not. IMRN {\bf 10} (2010), 1860–-1893.

\bibitem[DiSj99]{DiSj} M. Dimassi and J. Sj\"ostrand, \emph{Spectral asymptotics in the semi-classical limit}, Cambridge University Press, 1999.

\bibitem[Fe83]{Fe83} H. G. Feichtinger, {\it Modulation spaces over locally compact Abelian groups}, Technical Report, University Vienna, 1983.

\bibitem[Gr01]{Gr_book} K. Gr\"ochenig, {\it Foundations of time frequency analysis}, Birkh\"auser, 2001.

\bibitem[Gr06]{Gr06} K. Gr\"ochenig, {\it Time-frequency analysis of Sj\"ostrand's class}, Rev. Mat. Iberoam. {\bf 22} (2006), 703–-724.

\bibitem[GrHe99]{GrHe99} K. Gr\"ochenig and C. Heil, {\it Modulation spaces and pseudodifferential operators}, Integr. Equ. Oper. Theory {\bf 34} (1999), 439--457.

\bibitem[GrRz08]{GrRz08} K. Gr\"ochenig and Z. Rzeszotnik, {\it Banach algebras of pseudodifferential operators and their almost diagonalization},
Ann. Inst. Fourier {\bf 58} (2008), 2279–-2314.

\bibitem[GrTo11]{GrTo11} K. Gr\"ochenig and J. Toft, {\it Isomorphism properties of Toeplitz operators and pseudo-differential operators between modulation spaces}, J. Anal. Math. {\bf 114} (2011), 255–-283.

\bibitem[HelSj89]{HelSj} B.~Helffer and J.~Sj\"ostrand, {\it Semiclassical analysis for Harper's equation. {\rm III}. Cantor structure of the spectrum}, Mem. Soc. Math. France (N.S.) {\bf 39} (1989), 1--124.

\bibitem[HiLaSjZe22]{HiLaSjZe} M. Hitrik, R. Lascar, J. Sj\"ostrand, and M. Zerzeri, {\it Semiclassical Gevrey operators and magnetic translations}, J. Spectr. Theory {\bf 12} (2022), 53–-82.

\bibitem[HiSj15]{HiSj15} M.~Hitrik and J.~Sj\"ostrand, \emph{Two minicourses on analytic microlocal analysis},
"Algebraic and Analytic Microlocal Analysis", Springer Proceedings in Mathematics and Statistics {\bf 269} (2018), 483--540.

\bibitem[HiZw25]{HiZw25} M. Hitrik and M. Zworski, {\it Classically forbidden regions in the chiral model of twisted bilayer graphene (with an appendix by Zhongkai Tao and Maciej Zworski)}, Probab. Math. Phys. {\bf 6} (2025), 505--546.

\bibitem[HoToWa07]{HoToWa07} A. Holst, J. Toft, and P. Wahlberg, {\it Weyl product algebras and modulation spaces},
J. Funct. Anal. {\bf 251} (2007), 463–-491.

\bibitem[Hö85]{H3} L. H{\"o}rmander, \emph{The Analysis of Linear Partial Differential Operators {\rm III}. Pseudo-Differential Operators,\/}
	Springer Verlag, 1985.
	
\bibitem[Hö91]{Horm91} L. H\"ormander, {\it Quadratic hyperbolic operators}, Lecture Notes in Math. {\bf 1495}, 118--160, Springer, Berlin, 1991.

\bibitem[Kl22]{Kl} P. Kleinhenz, {\it Decay rates for the damped wave equation with finite regularity damping},
Math. Res. Lett. {\bf 29} (2022), 1087–-1140.

\bibitem[La01]{La01} D. Labate, {\it Pseudodifferential operators on modulation spaces}, J. Math. Anal. Appl. {\bf 262} (2001), 242–-255.

\bibitem[MeSj03]{MeSj03} A. Melin and J. Sj\"ostrand, {\it Bohr-Sommerfeld quantization condition for non-selfadjoint operators in dimension {\rm 2}}, Ast\'erisque, {\bf 284} (2003), 181–-244.

\bibitem[SiTo12]{SiTo} M. Signahl and J. Toft, {\it Mapping properties for the Bargmann transform on modulation spaces},
J. Pseudo-Differ. Oper. Appl. {\bf 3} (2012), 1–-30.

\bibitem[Sj83]{SjHokk} J. Sj\"ostrand, {\it Analytic wavefront sets and operators with multiple characteristics}, Hokkaido Math. Journal, {\bf 12} (1983), 392--433.

\bibitem[Sj94]{Sj94} J. Sj\"ostrand, {\it An algebra of pseudodifferential operators}, Math. Res. Lett. {\bf 1} (1994), 185–-192.

\bibitem[Sj95]{Sj95} J. Sj\"ostrand, {\it Wiener type algebras of pseudodifferential operators}, S\'eminaire sur les \'Equations aux D\'eriv\'ees Partielles, 1994–1995, Exp. No. IV, 21 pp., \'Ecole Polytech., Palaiseau, 1995.

\bibitem[Sj96]{Sj96} J. Sj\"ostrand, {\it Function spaces associated to global I-Lagrangian manifolds}, Structure of solutions of differential
equations, Katata/Kyoto, 1995, World Scientific, 1996.

\bibitem[Sj02]{Sj02} J. Sj\"ostrand, {\it Lectures on Resonances}, \url{https://sjostrand.perso.math.cnrs.fr/}, 2002.

\bibitem[Sj08]{Sj08} J. Sj\"ostrand, {\it Pseudodifferential operators and weighted normed symbol spaces}, Serdica Math. J. {\bf 34} (2008), 1–-38.

\bibitem[To01]{To01} J. Toft, {\it Subalgebras to a Wiener type algebra of pseudo-differential operators}, Ann. Inst. Fourier {\bf 51} (2001), 1347–-1383.

\bibitem[To04]{To04} J. Toft, {\it Continuity properties for modulation spaces, with applications to pseudo-differential calculus. {\rm I}.},
J. Funct. Anal. {\bf 207} (2004), 399–-429.

\bibitem[To07]{To07} J. Toft, {\it Continuity and Schatten properties for pseudo-differential operators on modulation spaces}, Oper. Theory Adv. Appl. {\bf 172}, 173–206, Birkh\"auser, Basel, 2007.

\bibitem[Zw12]{Zw12} M. Zworski, \emph{Semiclassical Analysis}, AMS, 2012.


\end{thebibliography}
\end{document}